\newtheorem{theorem}{Theorem} [section]
\newtheorem{lemma}[theorem]{Lemma}
\newtheorem{proposition}[theorem]{Proposition}
\newtheorem{corollary}[theorem]{Corollary}
\DeclareMathOperator*{\intt}{\int}
\DeclareMathOperator{\MAX}{MAX}
\newcommand{\noi}{\noindent}
\newcommand{\Z}{\mathbb{Z}}
\newcommand{\R}{\mathbb{R}}
\newcommand{\T}{\mathbb{T}}
\newcommand{\al}{\alpha}
\newcommand{\dl}{\delta}
\newcommand{\eps}{\varepsilon}
\newcommand{\kk}{\kappa}
\newcommand{\g}{\gamma}
\newcommand{\ld}{\lambda}
\newcommand{\s}{\sigma}
\newcommand{\ft}{\widehat}
\newcommand{\wt}{\widetilde}
\newcommand{\cj}{\overline}
\newcommand{\dx}{\partial_x}
\newcommand{\dt}{\partial_t}
\newcommand{\dd}{\partial_\dl}
\newcommand{\jb}[1]
{\langle #1 \rangle}
\begin{document}

\baselineskip = 20pt

\title
[Invariant Gibbs Measure for SBO System]
{\bf Invariance of the Gibbs Measure  for the Schr\"odinger-Benjamin-Ono System}

\author{Tadahiro Oh}

\address{Tadahiro Oh\\
Department of Mathematics\\
University of Toronto\\
40 St. George St, Rm 6290,
Toronto, ON M5S 2E4, Canada}

\email{oh@math.toronto.edu}

\subjclass[2000]{ 35Q53, 35G25.}

\keywords{Schr\"odinger; Benjamin-Ono; Gibbs measure;a.s global well-posedness; ill-posedness }

\begin{abstract}
We prove the invariance of the Gibbs measure for the periodic Schr\"odinger-Benjamin-Ono system
(when the coupling parameter $|\g| \ne 0, 1$) by establishing a new local well-posedness in a modified Sobolev space
and constructing the Gibbs measure (which is in the sub-$L^2$ setting for the Benjamin-Ono part.)
We also show the ill-posedness result in $H^s(\T) \times H^{s-\frac{1}{2}}(\T)$
for $s < \frac{1}{2}$ when $|\g| \ne 0,  1$ and for any $s \in \mathbb R $ when $|\g| =1$.
\end{abstract}

\maketitle

\section{Introduction}

In this paper, we consider the Schr\"odinger-Benjamin-Ono (SBO) system:
\begin{equation} \label{SBO}
\left\{
\begin{array}{l}
i u_t + u_{xx}  = \al vu \\
v_t + \g \mathcal{H}  v_{xx}  = \beta(|u|^2)_x \\
(u, v) \big|_{t = 0} = (u_0, v_0) 
\end{array}
\right. ,
\ \ \
(x, t) \in \T \times\R 
\end{equation}

\noindent
where $ \T = [0, 2\pi)$,  $u$ is a complex valued function,
 $v$ is a real-valued function, and $\al, \beta, \g$ are nonzero real constants.
In \eqref{SBO}, $\mathcal{H}$ denotes the Hilbert transform  whose Fourier multiplier is given by
$ -i \text{sgn}(n)$.
$D = |\dx| = \mathcal{H}\dx$ is defined via $\ft{Df}(n) = |n|\ft{f}(n)$.

The system \eqref{SBO} appears in Funakoshi-Oikawa \cite{FO},
describing the motion of two fluids with different densities under capillary-gravity waves in a deep water flow.
The Sch\"odinger part describes the short surface wave, 
and the Benjamin-Ono part describes the long internal wave.
The system also appears in the sonic-Langmuir wave interaction in plasma physic (Karpman \cite{Karpman}),
in the capillary gravity interaction waves (Djordjevic-Redekopp \cite{DR}, Grimshaw \cite{Grimshaw}),
and in the general theory of water wave interaction in a nonlinear medium (Benney \cite{Benney1, Benney2}.)

The several conservation laws are known for the SBO system:
\begin{align}  \label{CONSERVED1}
E_1 (v) = \int v  \, dx, \quad & E_2 (u) = \int |u|^2 dx, \quad
E_3 (u, v) = \text{Im} \int u \cj{u_x} dx + \frac{\al}{2\beta} \int v^2 dx \\
\label{CONSERVED2}
\text{and } \quad & H(u, v)  
= \frac{1}{2}\int |u_x|^2 dx - \frac{\al\g}{4\beta}\int(D^\frac{1}{2}v)^2dx  + \frac{\al}{2} \int v |u|^2 dx.
\end{align}

\noindent
Note that $H(u, v)$ is the Hamiltonian for \eqref{SBO} and indeed, in terms of the Hamiltonian formulation,
\eqref{SBO} can be written as
\begin{equation} 
\dt \begin{pmatrix}u\\v\\\end{pmatrix} = \begin{pmatrix} i & 0 \\0 & \frac{2\beta}{\al}\dx \end{pmatrix}\frac{d H(u, v)}{d (\cj{u}, v)},
\end{equation}

\noindent
where $\frac{dH}{d(\cj{u}, v)}$ is the Gateaux derivative with respect to the $L^2$ inner product
\[ \jb{(u_1, v_1), (u_2, v_2)} = \text{Re} \int u_1 \cj{u_2} dx + \int v_1 v_2 dx.\]

In this formulation, one natural questions is;
 Is the Gibbs measure of the form $d \mu = Z^{-1} e^{-\nu H(u, v)}\prod_{x \in \T} du(x) \otimes dv(x)$ invariant under the flow of \eqref{SBO}?  
At this point, $d \mu$ is merely a formal expression.
It is known (c.f. Zhidkov \cite{Z}) that when $\frac{\al\g}{\beta} < 0$, the Gaussian part of the Gibbs measure,
i.e. 
\begin{equation} \label{GAUSS}
d \rho = \wt{Z}^{-1}\exp{ \Big( -\nu \big(\frac{1}{2}\int |u_x|^2  - \frac{\al \g}{4\beta} \int (D^\frac{1}{2}v)^2 \big)\Big)}
\prod_{x \in \T} du(x) \otimes dv(x),
\end{equation}

\noindent
 is supported in $\cap_{s < \frac{1}{2}} H^s(\T) \times H^{s-\frac{1}{2}}(\T)$. 
Following Bourgain \cite{BO4}, we show that the Gibbs measure 
$d \mu $
is a well-defined probability measure 
with a suitable cutoff in terms of the $L^2$ norm of $u$ and the mean of $v$
(Lemma \ref{LEM:GABSCONTI}, Corollary \ref {tight}.)

Now, let's turn to the well-posedness theory of \eqref{SBO}.
In the non-periodic setting, several results are known.
When $|\g| \ne 1$, Bekiranov-Ogawa-Ponce \cite{BOP2} 
showed that \eqref{SBO} is locally well-posed in $H^s(\R) \times H^{s-\frac{1}{2}}(\R)$ for $s \geq 0$.
When $|\g | = 1$, Pecher \cite{PECH} showed the local well-posedness for $s > 0$. 
In view of the conservation laws, when $\frac{\al\g}{\beta} < 0$, these local results extend to the global ones for $s \geq 1$.
Using the $I$-method developed by Colliander-Keel- Staffilani-Takaoka-Tao \cite{CKSTT2,CKSTT4},
Pecher \cite{PECH} also proved the global well-posedness for $s > \frac{1}{3}$ when $\frac{\al\g}{\beta} < 0$.
Note that the $I$-method automatically provides a polynomial upper bound on the time growth of the norm of the solutions. 
Recently, when $|\g| \ne 1$, Angulo-Matheus-Pilod proved the global well-posedness for $ s= 0$ 
(without assuming $\frac{\al\g}{\beta} < 0$), 
following the method developed by Colliander-Holmer-Tzirakis \cite{CHT}.
The method is based on estimating the doubling time of $\|v(t)\|_{H_x^{-\frac{1}{2}}}$ in terms of the conserved quantity $\|u(t)\|_{L^2_x}$.
Note that this method provides an exponential upper bound on the time growth of $\|v(t)\|_{H_x^{-\frac{1}{2}}}$.

In the periodic setting, there seems to be only few results known at this point.
Assuming $|\g| \ne 0, 1$, Angulo-Matheus-Pilod \cite{AMP}
showed that \eqref{SBO} is locally well-posed in $H^s(\T) \times H^{s-\frac{1}{2}}(\T)$ for $s \geq \frac{1}{2}$.
They also showed the existence and stability of the periodic travelling wave solutions.
In \cite{AMP}, the local well-posedness is established via contraction argument
by establishing the following bilinear estimates:
\begin{align} \label{bilinear1}
\|uv\|_{X^{s, -\frac{1}{2}+} } & \lesssim \|u\|_{X^{s, \frac{1}{2}}} \|v\|_{X_\g^{s - \frac{1}{2}, \frac{1}{2}}} \\
 \label{bilinear2}
\|\dx(u_1\cj{u_2})\|_{X_\g^{s-\frac{1}{2}, -\frac{1}{2}+} } 
& \lesssim \|u_1\|_{X^{s, \frac{1}{2}}} \|u_2\|_{X^{s, \frac{1}{2}}} 
\end{align}

\noindent
for $s \geq \frac{1}{2}$
where $X^{s, b}$ and $X_\g^{s, b}$ are the Bourgain spaces corresponding to the linear parts of 
Schr\"odinger and Benjamin-Ono equations whose norms are given by
\begin{align} \label{XSB}
\|u\|_{X^{s, b}} & = \|\jb{n}^s \jb{\tau + n^2}^b \ft{u}(n, \tau) \|_{L^2_{n, \tau}}  \\
\label{XSBG}
\|v\|_{X_\g^{s, b}} & = \|\jb{n}^s \jb{\tau + \g |n|n}^b \ft{v}(n, \tau) \|_{L^2_{n, \tau}},
\end{align}

\noindent
where $\jb{\, \cdot \,} = 1 + |\cdot|$.
It is also shown in \cite{AMP} that both estimates \eqref{bilinear1} and \eqref{bilinear2} fail for $s < \frac{1}{2}$ when $|\g| \ne 0, 1$
and that they fail for any $s \in \R$ when $|\g| = 1$.

In Appendix, we show that the solution map of \eqref{SBO} is not smooth if $s < \frac{1}{2}$ for $|\g| \ne 0,  1$.
More precisely, let $\Phi^t : (u_0, v_0) \mapsto (u(t)), v(t)) \in H^s (\T) \times H^{s-\frac{1}{2}}(\T)$
be the solution map of \eqref{SBO} for $|t| \ll1$.
Then, 

\begin{theorem} \label{THM:illposed}

\textup{(a)} Let $|\g| \ne 0, 1$. If the solution map $\Phi^t$ is $C^2$ on $H^s (\T) \times H^{s-\frac{1}{2}}(\T)$, then $s \geq \frac{1}{2}.$
\textup{(b)} If $|\g| = 1$, then the solution map can never be $C^2$ for any $s\in \R$.
\end{theorem}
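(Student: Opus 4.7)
The approach is Bourgain's classical $C^2$-obstruction: if $\Phi^t$ is twice Fr\'echet differentiable at the origin of $H^s(\T)\times H^{s-\frac{1}{2}}(\T)$ for $|t|\ll 1$, then its second derivative is a bounded bilinear operator, which amounts to the boundedness of certain bilinear Duhamel integrals. Writing \eqref{SBO} in its mild form
\[
u(t)=S(t)u_0-i\al\!\int_0^t\! S(t-t')(vu)(t')\,dt',\qquad v(t)=W_\g(t)v_0+\beta\!\int_0^t\! W_\g(t-t')\dx(|u|^2)(t')\,dt',
\]
with $S(t)=e^{it\dx^2}$ and $W_\g(t)=e^{-\g t\mathcal{H}\dx^2}$, and substituting $(u_0,v_0)=\eps(\phi,\psi)$, the $\eps^2$-terms in the expansion of $\Phi^t$ yield the operators
\[
\mathcal{B}_u(\phi,\psi)(t)=-i\al\!\int_0^t\! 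S(t-t')\big[W_\g(t')\psi\cdot S(t')\phi\big]\,dt',\quad \mathcal{B}_v(\phi)(t)=\beta\!\int_0^t\! W_\g(t-t')\dx\big|S(t')\phi\big|^2\,dt'.
\]
A $C^2$ flow would require $\|\mathcal{B}_u(\phi,\psi)\|_{H^s}\lesssim\|\phi\|_{H^s}\|\psi\|_{H^{s-\frac12}}$ and $\|\mathcal{B}_v(\phi)\|_{H^{s-\frac12}}\lesssim\|\phi\|_{H^s}^2$; I will prove both parts by exhibiting initial data violating the second inequality.

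A direct Fourier computation gives
\[
\widehat{\mathcal{B}_v(\phi)(t)}(n)=\beta\,e^{-i\g t|n|n}\sum_{n_1-n_2=n}\ft\phi(n_1)\cj{\ft\phi(n_2)}\cdot\frac{e^{itn[\g|n|-(n_1+n_2)]}-1}{\g|n|-(n_1+n_2)},
\]
so the resonance function is $\Omega(n,n_1,n_2):=\g|n|-(n_1+n_2)$ under the constraint $n=n_1-n_2$. Whenever $\Omega\to 0$, the fraction above converges to $itn$; consequently the strategy is to choose $\ft\phi$ supported on modes realizing $\Omega=0$ (or $|\Omega|\lesssim 1/|n|$), so that the extra factor of $|n|$ produced by the resonance overwhelms the $H^{s-1/2}$ norm.

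For part (b), take $\g=1$ (the case $\g=-1$ is symmetric). The resonance $|n|=n_1+n_2$ combined with $n=n_1-n_2$ admits the infinite family $\{(n,0):n>0\}$. Set $\ft\phi(0)=1$ and $\ft\phi(N)=N^{-s}$, zero otherwise, with $N\to\infty$. Then $\|\phi\|_{H^s}\simeq 1$ for every $s\in\R$, and the contribution to mode $n=N$ coming from $(n_1,n_2)=(N,0)$ is exactly resonant, so
\[
|\widehat{\mathcal{B}_v(\phi)(t)}(N)|=|\beta|\cdot N\cdot N^{-s}\cdot|t|=|\beta|\,N^{1-s}|t|.
\]
Hence $\|\mathcal{B}_v(\phi)(t)\|_{H^{s-\frac12}}^2\gtrsim\jb{N}^{2s-1}\cdot N^{2(1-s)}t^2\simeq N\,t^2\to\infty$ for every $s\in\R$.

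For part (a) with $|\g|\ne 0,1$, the resonance $\g|n|=n_1+n_2$ together with $n=n_1-n_2$ forces $n_1=\frac{(\g+1)n}{2}$, $n_2=\frac{(\g-1)n}{2}$ (for $n,\g>0$; other sign choices are analogous), which has infinitely many integer solutions along an arithmetic progression when $\g$ is rational. For irrational $\g$, Dirichlet's approximation theorem yields a sequence $n_k\to\infty$ and integers $m_k$ of the correct parity with $|\g n_k-m_k|\le 1/n_k$, so setting $n_1=(m_k+n_k)/2$, $n_2=(m_k-n_k)/2$ gives $|\Omega|\le 1/n_k$ and hence $|tn\Omega|\lesssim|t|$ for $t$ fixed. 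Choosing $\ft\phi$ supported at these (near-)resonant pairs with $\ft\phi(n_i)=|n_i|^{-s}$ ensures $\|\phi\|_{H^s}\simeq 1$, while
\[
|\widehat{\mathcal{B}_v(\phi)(t)}(n)|\gtrsim|n_1|^{-s}|n_2|^{-s}\cdot|n|\cdot|t|\simeq|t|\,n^{1-2s}
\]
(using $|n_1|,|n_2|\simeq|n|$ since $|\g|$ is bounded away from $1$). This yields $\|\mathcal{B}_v(\phi)(t)\|_{H^{s-\frac12}}^2\gtrsim n^{2s-1}\cdot n^{2(1-2s)}t^2=n^{1-2s}t^2$, diverging as $n\to\infty$ precisely for $s<\tfrac12$. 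The main technical obstacle is the irrational $\g$ case, where no exact integer resonance exists; Dirichlet's approximation supplies the needed near-resonance, mirroring the $X^{s,b}$ counterexamples from \cite{AMP} for the failure of \eqref{bilinear2}.
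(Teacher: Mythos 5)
Your proposal is correct and follows essentially the same route as the paper: Bourgain's $C^2$-obstruction applied to the second-order (bilinear Duhamel) term of the $v$-equation, with two-mode initial data placed at the resonance of $\dx(|u|^2)$ and Dirichlet's approximation theorem handling irrational $\g$, yielding the same lower bounds $N^{\frac{1}{2}-s}$ in part (a) and $N^{\frac{1}{2}}$ in part (b). The one point you leave implicit --- choosing the Dirichlet approximants $m_k, n_k$ with matching parity so that $n_1=(m_k+n_k)/2$ and $n_2=(m_k-n_k)/2$ are integers --- is harmless (replace $(p,q)$ by $(2p,2q)$, or round to nearest integers as the paper does).
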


\noindent
In particular, Theorem \ref{THM:illposed} states 
that the flow of \eqref{SBO} via the usual contraction argument with the Bourgain norm
is {\it not} defined in $\cap_{s < \frac{1}{2}} H^s(\T) \times H^{s-\frac{1}{2}}(\T)$
containing the support of the Gibbs measure $\mu$.
Thus, we need to seek for a new local well-posedness result in a space containing the support of $\mu$.

From the standard argument (c.f. Kenig-Ponce-Vega \cite{KPV4}), 
the proof of the bilinear estimate
$\|uv\|_{X^{s, 1-b} }\lesssim \|u\|_{X^{s, b}} \|v\|_{X_\g^{s - \frac{1}{2}, b}}$ 
with $b = \frac{1}{2}$ or $\frac{1}{2}+$ comes down to establishing an effective upper bound
(which needs to be at most of the order 1 in view of Bourgain's periodic $L^4$ Strichartz estimate \cite{BO1}. 
See Lemma \ref{LEM:L4}) on
\begin{equation} \label{IN1}
 \frac{\jb{n}^s} {\jb{n_1}^s \jb{ n_2}^{s-\frac{1}{2}}} \frac{1}{ \max\big( \jb{\tau+ n^2}^{1-b},  \jb{\tau_1 + n_1^2}^b,  \jb{\tau_2 + \g |n_2|n_2}^b\big) },
\end{equation}

\noindent
where $n = n_1 + n_2$ and  $\tau = \tau_1 + \tau_2$ with $n, n_1, n_2 \in \Z$.
Note that
\begin{align} \label{IN2}
\max( \jb{\tau+ n^2},  & \jb{\tau_1 + n_1^2},  \jb{\tau_2 + \g |n_2|n_2}) 
\gtrsim | -(\tau+ n^2) + (\tau_1 + n_1^2) + (\tau_2  + \g |n_2|n_2 )| \notag \\
& = |n_2| | R_n(n_2) |,
\end{align}

\noindent
where $R_n(n_2) = (\g \, \text{sgn}(n_2) + 1)n_2 - 2n$.
For simplicity, assume $n, n_2 > 0$.
Then, when $n_2 \sim \frac{2n}{1+ \g}$, we have $R_n(n_2) \sim 0$.
In particular, if $|\g| \ne 1$, then we have $|n| \sim |n_1| \sim |n_2|$ in such a situation.
Then, we have
$ \eqref{IN1} \lesssim \frac{\jb{n}^s} {\jb{n_1}^s \jb{ n_2}^{s-\frac{1}{2}}} \sim \jb{n}^{-s+\frac{1}{2}}$,
which forces us to restrict ourselves to the case $s \geq \frac{1}{2}$.
However, note that for each $n \in \Z$, there are at most two values of $n_2$, i.e. $n_2 = \big[\frac{2n}{1+ \g}\big]$, 
$\big[\frac{2n}{1+ \g}\big] + 1$ which makes $R_n(n_2) \sim 0$. 
For all other values of $n_2$, we have $|R_n(n_2)| \gtrsim 1$.
Thus, with $b = \frac{1}{2}$, we have 
$ \eqref{IN1} \lesssim \frac{\jb{n}^s} 
{\jb{n_1}^s \jb{ n_2}^{s-\frac{1}{2}}|n_2|^\frac{1}{2}} \sim \frac{\jb{n}^s} {\jb{n_1}^s \jb{ n_2}^{s}}
\lesssim 1, $
on $A = \big\{ (n, n_1, n_2) : n = n_1 + n_2, \ \big|n_2 - \frac{2n}{1+\g}\big| > 1\big\}$
as long as $ s \geq 0$.

This motivates us to consider the initial value problem \eqref{SBO}
with the initial condition 
$(u_0, v_0) \in \text{H}^{s_1, s_2} (\T) := H^{s_1, s_2}(\mathbb{T}) \times H^{s_1-\frac{1}{2},\,  s_2-\frac{1}{2}} (\mathbb{T})$, 
where  $H^{s_1, s_2}$ is defined via the norm given by
\begin{equation} \label{GibbsIV} 
\| \phi \|_{H^{s_1, s_2}} = \| \phi \|_{H^{s_1}} + \sup_{n} \jb{n}^{s_2} |\ft{\phi}(n)| < \infty 
\end{equation}

\noindent
for some $s_1, s_2$ with $0 < s_1 = \frac{1}{2}- < \frac{1}{2} < s_2 = 1- < 1$ 
(with some additional conditions to be determined later.)

From the above heuristic argument,  we see that $s\geq 0$
is enough to establish the crucial bilinear estimates on $A$.
In particular, $s_1 = \frac{1}{2}-$ is a sufficient regularity on $A$.
On the other hand, the resonances at $n_2 = \big[\frac{2n}{1+ \g}\big]$, $\big[\frac{2n}{1+ \g}\big] + 1$
forces $s \geq \frac{1}{2}$.
However, for each fixed $n \in \Z$, there are only two values of $n_2$ causing the resonances, 
which can be controlled by 
$\sup_{n} \jb{n}^{s_2} |\ft{\phi}(n)|$ in \eqref{GibbsIV} 
with the higher regularity $s_2 = 1- > \frac{1}{2}$.
Then, via a contraction in the modified Bourgain space, we  prove

\begin{theorem} \label{THM:LWP}
Let $s _1 = \frac{1}{2}-$, $s_2 = 1-$ with $s_2 < 2s_1$.
Assume $|\g| \ne 0, 1$.
Then, the SBO system \eqref{SBO} is locally well-posed in $\textup{\text{H}}^{s_1, s_2} (\T)$.
\end{theorem}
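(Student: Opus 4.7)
The plan is to run a Picard iteration for the Duhamel form of \eqref{SBO} in a pair of modified Bourgain-type spaces adapted to the splitting \eqref{GibbsIV}. Concretely, define
\[
\|u\|_{Y^{s_1, s_2, b}} := \|u\|_{X^{s_1, b}} + \sup_n \jb{n}^{s_2} \big\|\jb{\tau + n^2}^b \ft u(n, \tau)\big\|_{L^2_\tau},
\]
and analogously $Y_\g^{s_1 - \frac{1}{2}, s_2 - \frac{1}{2}, b}$ with the Benjamin--Ono phase $\jb{\tau + \g|n|n}$, so that $\|u(\cdot, t_0)\|_{H^{s_1, s_2}} \lesssim \|u\|_{Y^{s_1, s_2, b}}$ for $b > \frac{1}{2}$ by the standard $X^{s,b}$-trace embedding applied to each piece. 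The linear group estimates and the Duhamel restriction estimate then transfer component-wise to these hybrid spaces, producing the usual $T^{0+}$ gain from a smooth time cutoff. With $b = \frac{1}{2}+$, local well-posedness reduces to the pair of bilinear estimates
\[
\|uv\|_{Y^{s_1, s_2, -\frac{1}{2}+}} \lesssim \|u\|_{Y^{s_1, s_2, \frac{1}{2}+}} \|v\|_{Y_\g^{s_1 - \frac{1}{2}, s_2 - \frac{1}{2}, \frac{1}{2}+}},
\]
\[
\|\dx(u_1 \cj{u_2})\|_{Y_\g^{s_1-\frac{1}{2}, s_2-\frac{1}{2}, -\frac{1}{2}+}} \lesssim \|u_1\|_{Y^{s_1, s_2, \frac{1}{2}+}} \|u_2\|_{Y^{s_1, s_2, \frac{1}{2}+}}.
\]

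To prove each, I would decompose the frequency convolution according to whether $(n, n_1, n_2)$ lies in the non-resonant set $A = \{|n_2 - 2n/(1+\g)| > 1\}$ or its complement $A^c$. On $A$ the heuristic recalled in the introduction shows that the weight $\jb{n}^{s_1}/(\jb{n_1}^{s_1}\jb{n_2}^{s_1 - \frac{1}{2}})$ combined with the dispersion lower bound $|n_2| |R_n(n_2)| \gtrsim |n_2|$ from \eqref{IN2} is $O(1)$ already for $s_1 \geq 0$, so the estimate closes by Bourgain's periodic $L^4$ Strichartz inequality (Lemma \ref{LEM:L4}) exactly as in \cite{AMP}. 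On $A^c$, for each output $n$ there are at most two resonant triples and, thanks to $|\g|\ne 1$, each satisfies $|n_1| \sim |n_2| \sim |n|$. I would bound the $A^c$ contribution by pulling out one input factor via its $\ell^\infty_n$-weighted piece of the $Y$-norm and, using that the maps $n \mapsto n_1$ and $n \mapsto n_2$ are affine with rational slope and hence of bounded multiplicity, placing the remaining factor in $\ell^2_n$; a parallel argument controls the $\ell^\infty_n$ piece of the output norm by putting both inputs in their $\ell^\infty_n$ pieces. The bookkeeping uses $s_2 > \frac{1}{2}$ to absorb the single resonant frequency and $s_2 < 2 s_1$ to balance the weights when trading an $\ell^2_n$ factor for an $\ell^\infty_n$ factor at the inputs while keeping the output in $\ell^2_n$ with weight $\jb{n}^{s_1}$ (or $\jb{n}^{s_1-\frac{1}{2}}$).

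Given the two bilinear estimates, a standard Duhamel fixed-point argument in a ball in $Y^{s_1, s_2, \frac{1}{2}+} \times Y_\g^{s_1 - \frac{1}{2}, s_2 - \frac{1}{2}, \frac{1}{2}+}$ produces a unique local solution with continuous dependence on the initial data, for a time $T = T(\|u_0\|_{H^{s_1, s_2}}, \|v_0\|_{H^{s_1 - 1/2, s_2 - 1/2}}) > 0$. The main obstacle will be the resonant piece of \eqref{bilinear2}: the extra $\dx$ at the output contributes an additional $|n|$, the conjugation in $\cj{u_2}$ shifts the resonance locus (while preserving $|n_1| \sim |n_2| \sim |n|$), and one must split into subcases according to which of $\jb{\tau + n^2}$, $\jb{\tau_1 + n_1^2}$, $\jb{\tau_2 + \g |n_2|n_2}$ is largest in order to decide whether to spend the $(+)$-dispersion on an $L^2_\tau$ Cauchy--Schwarz or on reducing the $\jb{n}$-weight. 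Verifying that in every such subcase the net power of $\jb{n}$ produced by the $A^c$ analysis is summable under the sharp hypothesis $s_2 < 2s_1$, while simultaneously recovering the $\ell^\infty_n$-output bound, is the computationally delicate heart of the argument.
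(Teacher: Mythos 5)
Your proposal follows essentially the same route as the paper: the hybrid norms you call $Y^{s_1,s_2,b}$ are exactly the paper's modified Bourgain spaces $X^{s_1,s_2,b}$ in \eqref{NEWXSB}--\eqref{NEWXSBG}, the reduction to the two bilinear estimates is Propositions \ref{PROP:Gibbsbilinear1}--\ref{PROP:Gibbsbilinear2}, and the splitting into the non-resonant set $A$ (handled via \eqref{IN2} and Bourgain's $L^4$ estimate, Lemma \ref{LEM:L4}) versus the finitely-many resonant frequencies on $A^c$ (absorbed by the $L^\infty_n$-weighted piece with $s_2\geq\frac12$, under the constraint $s_2<2s_1$) is precisely the paper's argument. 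The proposal is correct in outline and matches the paper's proof.
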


Note that $\text{H}^{s_1, s_2} \subsetneq H^{s_1} \times H^{s_1-\frac{1}{2}}$.
However, from the theory of abstract Wiener spaces (Gross \cite{GROSS}, Kuo \cite{KUO}),
the Gaussian measure $d \rho$ in \eqref{GAUSS}
is a countably additive probability measure supported on 
$\text{H}^{s_1, s_2}$ for
$0 < s_1  < \frac{1}{2} < s_2  < 1$.  
(See Bourgain \cite{BO4} for mKdV and \cite{BO5} for the Zakharov system.)
Then, a slight modification Bourgain's argument \cite{BO4} (for super-cubic NLS) 
with Theorem \ref{THM:LWP} yields the following result.

\begin{theorem} \label{THM:INVARIANCE}
Let $\frac{\al \g}{\beta} < 0$.
The Gibbs measure $\mu$ for the SBO system \eqref{SBO}  is invariant under the flow.
Moreover, \eqref{SBO} is globally well-posed almost surely on the statistical ensemble.
\end{theorem}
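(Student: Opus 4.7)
The plan is to follow the Bourgain invariance scheme from \cite{BO4}, adapted to the SBO system using the local theory of Theorem \ref{THM:LWP}. The proof has three ingredients: a finite-dimensional Galerkin truncation whose truncated Gibbs measure is invariant by Liouville's theorem; an approximation of the SBO flow by the truncated flow on sets of large $\mu$-measure; and a globalization step that iterates the local theory using the (nearly) invariant measure.

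First, for each $N\ge 1$ let $P_N$ denote the spatial projection onto frequencies $|n|\le N$, and consider the truncated SBO system obtained by inserting $P_N$ before each nonlinearity. This is a Hamiltonian ODE on the finite-dimensional space $P_N L^2 \times P_N L^2$ with truncated Hamiltonian $H_N = H\circ P_N$, and the truncated analogues of $E_1, E_2, E_3$ in \eqref{CONSERVED1} are also conserved along its flow $\Phi_N^t$. By Liouville's theorem combined with these conservation laws, the truncated Gibbs measure
\[
d\mu_N = \chi_{\{\|P_N u\|_{L^2}\le K\}}\, \chi_{\{|\int P_N v|\le K\}}\, e^{-\nu H_N}\, d\rho_N
\]
(with $\rho_N$ the finite-dimensional Gaussian corresponding to the quadratic part of $H$) is invariant under $\Phi_N^t$. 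The hypothesis $\frac{\al\g}{\beta}<0$ ensures that this quadratic part is positive definite; combined with the abstract Wiener space construction of Gross-Kuo on $\text{H}^{s_1,s_2}$, Lemma \ref{LEM:GABSCONTI}, and Corollary \ref{tight}, this shows that $\mu$ is a genuine probability measure on $\text{H}^{s_1,s_2}$ and that $\mu_N\to\mu$ weakly as $N\to\infty$.

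Next I would prove a flow approximation statement: for every $T,\eps>0$ there exists an exceptional set $\Omega_{N,T,\eps}$ with $\mu(\Omega_{N,T,\eps})<\eps$ such that off $\Omega_{N,T,\eps}$ one has $\|\Phi^t(\phi)-\Phi_N^t(\phi)\|_{\text{H}^{s_1,s_2}}\to 0$ as $N\to\infty$, uniformly in $|t|\le T$. This is obtained by running the contraction argument that proves Theorem \ref{THM:LWP} in parallel for the two flows, bounding the error in terms of the high-frequency tail $\|(I-P_N)\phi\|_{\text{H}^{s_1,s_2}}$, which, on Gaussian random data, tends to zero off an exceptional set of small $\rho$-measure via standard large-deviation bounds for Gaussian Fourier series. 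Globalization then proceeds by a pigeonhole argument: given $T,\eps>0$, pick $R$ large enough that $\mu\bigl(\{\phi:\|\phi\|_{\text{H}^{s_1,s_2}}>R\}\bigr)<\eps\,\delta(R)/T$, where $\delta(R)$ is the local existence time of Theorem \ref{THM:LWP}. Invariance of $\mu_N$ implies that the $\mu_N$-measure of data whose truncated trajectory leaves $B_R$ at some time $j\delta(R)$, $0\le j\le T/\delta(R)$, is at most $2\eps$ for $N$ large; combining with the flow approximation gives a $\mu$-set of measure $>1-3\eps$ on which the local theory iterates to produce a solution on $[-T,T]$. Taking $\eps=2^{-k}$ and $T=2^k$ and intersecting over $k$ yields almost sure global well-posedness. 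Invariance of $\mu$ under the resulting global flow then follows by testing against bounded continuous $F$:
\[
\int F\circ\Phi^t\, d\mu = \lim_{N\to\infty} \int F\circ\Phi_N^t\, d\mu_N = \lim_{N\to\infty} \int F\, d\mu_N = \int F\, d\mu,
\]
where the first equality uses $\mu_N\to\mu$ together with the flow approximation, and the middle equality uses invariance of $\mu_N$ under $\Phi_N^t$.

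The main obstacle I anticipate is the flow approximation on the non-translation-invariant space $\text{H}^{s_1,s_2}$. Unlike the super-cubic NLS setting of \cite{BO4}, where the local theory works in a single clean $H^s$-based Bourgain norm, here one must propagate both the Sobolev $s_1$ component and the $\sup_n \jb{n}^{s_2}|\ft\phi(n)|$ component through the bilinear estimates of Theorem \ref{THM:LWP}. The resonant modes at $n_2=\bigl[\tfrac{2n}{1+\g}\bigr]$ and $\bigl[\tfrac{2n}{1+\g}\bigr]+1$, precisely those absorbed by the $s_2$ part of the norm, must be shown to remain under deterministic control after randomization and uniformly in the truncation parameter $N$; this probabilistic control of the resonant high-frequency contribution, together with the bilinear estimates underlying Theorem \ref{THM:LWP}, is what closes the globalization.
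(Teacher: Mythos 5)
Your proposal follows exactly the Bourgain \cite{BO4} scheme (finite-dimensional truncation with Liouville invariance, weak convergence $\mu_N\rightharpoonup\mu$, flow approximation off small exceptional sets, pigeonhole globalization, and passage to the limit for invariance) that the paper itself invokes for Theorem \ref{THM:INVARIANCE}, with the genuinely new ingredients being the local theory of Theorem \ref{THM:LWP} and the measure construction of Lemma \ref{LEM:GABSCONTI} and Corollary \ref{tight} --- precisely the pieces the paper supplies in detail before declaring the rest standard. Your identification of the main remaining issue (propagating the $\sup_n\jb{n}^{s_2}|\ft{\phi}(n)|$ component and the resonant modes through the approximation, uniformly in $N$) matches the paper's stated reason for working in $\textup{H}^{s_1,s_2}$, so the approach is essentially the same.
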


\section{Notations}

On $\T$, the spatial Fourier domain is $\Z$.
Let $dn$ be the normalized counting measure on $\Z$, 
and we say $f \in L^p(\Z)$, $1 \leq p < \infty$, if
\[ \| f \|_{L^p(\mathbb{Z})} = \bigg( \int_{\mathbb{Z}} |f(n)|^p dn \bigg)^\frac{1}{p}  
:= \bigg( \frac{1}{2\pi} \sum_{n \in \mathbb{Z}} |f(n)|^p \bigg)^\frac{1}{p} < \infty.\]

\noindent
If $ p = \infty$, we have the obvious definition involving the essential supremum.
We often drop $2\pi$ for simplicity.
If the function depends on both $x$ and $t$, we use ${}^{\wedge_x}$ 
(and ${}^{\wedge_t}$) to denote the spatial (and temporal) Fourier transform, respectively.
However, when there is no confusion, we simply use ${}^\wedge$ to denote the spatial Fourier transform,
temporal Fourier transform, and  the space-time Fourier transform, depending on the context.

Let $X^{s, b}$ and $X_\g^{s, b}$ be as in \eqref{XSB} and \eqref{XSBG}.
Given any time interval $I = [t_1, t_2]\subset \mathbb{R}$, we define the local in time $X^{s, b}(\T \times I )$ 
(or simply $X^{s, b}[t_1, t_2]$) by
\[ \|u\|_{X_I^{s, b}} 
= \|u \|_{X^{s, b}(\T \times I )} = \inf \big\{ \|\wt{u} \|_{X^{s, b}(\T \times \mathbb{R})}: {\wt{u}|_I = u}\big\}.\]

\noindent
We define the local in time $X^{s, b}_\g (\T \times I )$ analogously.

Let $\eta \in C^\infty_c(\mathbb{R})$ be a smooth cutoff function supported on $[-2, 2]$ with $\eta \equiv 1$ on $[-1, 1]$
and let $\eta_{_T}(t) =\eta(T^{-1}t)$. 
We use $c,$ $ C$ to denote various constants, usually depending only on $s_1, s_2, b, \alpha, \beta$, and  $\g$.
If a constant depends on other quantities, we will make it explicit.
We use $A\lesssim B$ to denote an estimate of the form $A\leq CB$.
Similarly, we use $A\sim B$ to denote $A\lesssim B$ and $B\lesssim A$
and use $A\ll B$ when there is no general constant $C$ such that $B \leq CA$.
We also use $a+$ (and $a-$) to denote $a + \eps$ (and $a - \eps$), respectively,  
for arbitrarily small $\eps \ll 1$.

\section{New Local Well-Posedness on the Modified Sobolev Space}

In this section, we establish the local well-posedness of \eqref{SBO} 
with the initial data in $\text{H}^{s_1, s_2}
 = H^{s_1, s_2} \times H^{s_1-\frac{1}{2}, s_2-\frac{1}{2}}$ for $s_1 = \frac{1}{2} -$ and $s_2 = 1-$ 
via a contraction on a ball 
in the modified Bourgain space $X^{s_1, s_2, b} \times X_\g^{s_1-\frac{1}{2}, s_2-\frac{1}{2}, b}$.
First, define the modified Bourgain spaces $X^{s_1, s_2, b}$ and $X_\g^{s_1, s_2, b}$ whose norms are given by
\begin{align} \label{NEWXSB}
\| u \|_{X^{s_1, s_2, b}} & = \|u\|_{X^{s_1, b}} + \|u\|_{X^{s_2, \infty, b}} \\
\label{NEWXSBG}
\| v \|_{X_\g^{s_1, s_2, b}} & = \|v\|_{X_\g^{s_1, b}} + \|v\|_{X_\g^{s_2, \infty,  b}}, 
\end{align}

\noindent
where $X^{s_1, b}$ and $X_\g^{s_1, b}$ are defined in \eqref{XSB} and \eqref{XSBG},
and 
\begin{align}
\|u\|_{X^{s_2, \infty,  b}}  & = \| \jb{n}^{s_2} \jb{\tau + n^2}^b \ft{u}(n, \tau) \|_{L^\infty_n L^2_\tau} \\
 \|v\|_{X_\g^{s_2, \infty,  b}} & =  \| \jb{n}^{s_2} \jb{\tau + \g|n|n}^b \ft{v}(n, \tau) \|_{L^\infty_n L^2_\tau}.
 \end{align} 

Recall that  when $b > \frac{1}{2}$, the $X^{s_1, b} \times X_\g^{s_1-\frac{1}{2}, b}$ norm controls the $C([-T, T] ; H^{s_1} \times H^{s_1-\frac{1}{2}})$ norm.
Also, when $b > \frac{1}{2}$, by Sobolev embedding, we have
\[\sup_n \jb{n}^{s_2}  | \ft{u}(n, t)| 
= \sup_n \jb{n}^{s_2} | e^{in^2t}\ft{u}(n, t)|
 \lesssim \sup_n \jb{n}^{s_2} \| e^{in^2t}\ft{u}(n, t)\|_{H^b_t}   
 = \| u \|_{X^{s_1, s_2, b}},\]

\noindent
 for any  $t \in \mathbb{R}$. A similar result holds if we replace $\jb{\tau + n^2}^b$ by $\jb{\tau + \g|n|n}^b$.
Hence, the $X^{s_1, s_2, b} \times X_\g^{s_1-\frac{1}{2}, s_2-\frac{1}{2}, b} $ norm controls 
the $C([-T, T] ; \text{H}^{s_1, s_2} )$ norm.

By writing \eqref{SBO} in the integral form, we see that $(u, v)$ is a solution to \eqref{SBO}
with the initial condition $(u_0, v_0)$ for $|t| \leq T \leq 1$ if and only if
\begin{equation} 
\begin{pmatrix} 
u(t) \\ v(t) 
\end{pmatrix} 
= \Phi^t_{(u_0, v_0)} (u, v)
:=
\begin{pmatrix} 
\eta (t) U(t) u_0 + i \al \eta_{_T}(t)  \int_0^t U(t - t') uv (t') dt' \\
\eta (t) V(t) v_0 - \beta \eta_{_T}(t) \int_0^t V(t - t') \dx \big( |u(t')|^2\big)  dt' \\
\end{pmatrix},
\end{equation}

\noindent
where $U(t) = e^{it\dx^2}$ and $V(t) = e^{-\g t \mathcal{H} \dx^2}$.
First, note that 
$(\eta(t)U(t) u_0)^{\wedge}(n, \tau) = \ft{\eta}(\tau +n^2) \ft{u_0}(n)$
and $(\eta(t)V(t) v_0)^{\wedge}(n, \tau) = \ft{\eta}(\tau +\g |n|n) \ft{v_0}(n)$.
Hence, we have
\begin{align} \label{LINEAR1}
 \|  \eta(t) U(t) u_0  \|_{X^{s_1, s_2, b}}  & \lesssim \| u_0  \|_{H^{s_1, s_2}}
 \\ \label{LINEAR2}
\| \eta(t) V(t) v_0  \|_{X_\g^{s_1-\frac{1}{2}, s_2-\frac{1}{2}, b}} & \lesssim \| v_0  \|_{H^{s_1-\frac{1}{2}, s_2-\frac{1}{2}}}.
\end{align}

\noindent
Now, let $-\frac{1}{2} < b' \leq 0 \leq b \leq b'+1$ and $T\leq 1$. 
Then, from (2.25) in Lemma 2.1 (ii) in Ginibre-Tsutsumi-Velo \cite{GTV}, we have
\begin{align} \label{DUHAMEL1}
  \| \eta_{_T} (U*_RF)\|_{X^{s_1, b}} & \lesssim T^{1-b + b'}\| F\|_{X^{s_1, b'}}
\\ \label{DUHAMEL2}  
\| \eta_{_T} (V*_R G)\|_{X_\g^{s_1 -\frac{1}{2}, b}} & \lesssim T^{1-b + b'}  \|G\|_{X_\g^{s_1-\frac{1}{2}, b'}}, 
\end{align}

\noindent
where $*_R$ denotes the retarded convolution, i.e. $ U*_R F(t) = \int_0^t U(t - t')  F (t')  dt'$.
Also, from (2.24) in Lemma 2.1 (ii) in \cite{GTV}, we have
$\|\eta_{_T} (U*_RF)\|_{H^b_t} 
 \lesssim T^{1-b + b'} \|   U( - t)    F (t)  \|_{H^b_t}.$
Noting that $\|u\|_{X^{s_2, \infty, b}} = \big\|\big(U(-t) u\big)^{\wedge_x}(n, t)\big\|_{L^\infty_n H^b_t}$,
we have 
\begin{equation} \label{DUHAMEL3}
\| \eta_{_T} (U*_R F)\|_{X^{s_2, \infty, b}}  \lesssim T^{1-b+b'} \|  F\|_{X^{s_2, \infty, b'}}.
\end{equation}

\noindent
The same computation holds if we replace $U(t)$ by $V(t)$, and thus we obtain
\begin{equation} \label{DUHAMEL4}
\| \eta_{_T} ( V*_RG)\|_{X_\g^{s_2, \infty, b}}  \lesssim T^{1-b+b'} \|  G \|_{X_\g^{s_2, \infty, b'}}.
\end{equation}

\noindent
Then, the local well-posedness of \eqref{SBO} in $\text{H}^{s_1, s_2}$ 
follows from the standard argument \cite{BO1, KPV4}
once we prove the following bilinear estimates.

\begin{proposition} \label{PROP:Gibbsbilinear1}
Let $b = \frac{1}{2}+$ and $b' = -\frac{1}{2}+$ with $ 1+ b' > b$.
Then, for $0 < s_1 < \frac{1}{2} \leq s_2 < 1$ with $s_2 < 2s_1$, we have
\begin{equation} \label{Gibbsbilinear1}
\| uv \|_{X^{s_1, s_2, b'}} \lesssim \|u \|_{X^{s_1, s_2, b}} \|v \|_{X_\g^{s_1-\frac{1}{2}, s_2-\frac{1}{2}, b} }.
\end{equation}
\end{proposition}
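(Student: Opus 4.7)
Since $\|uv\|_{X^{s_1,s_2,b'}} = \|uv\|_{X^{s_1,b'}} + \|uv\|_{X^{s_2,\infty,b'}}$, I would estimate the two summands separately, in each case dualizing: for the first, against $w\in X^{-s_1,-b'}$; for the second, with $n$ fixed, against a unit $h\in L^2_\tau$. Parametrizing $\ft u,\ft v$ by non-negative $L^2$-densities $F,G$ with the Bourgain weights absorbed, the problem reduces to bounding an explicit trilinear sum/integral in $(n,n_1,n_2,\tau,\tau_1,\tau_2)$ with $n=n_1+n_2$, $\tau=\tau_1+\tau_2$, whose kernel is an explicit product of Sobolev and modulation weights. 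The decisive algebraic input is the resonance identity \eqref{IN2}, i.e.\ $\max(M_0,M_1,M_2)\gtrsim |n_2|\,|R_n(n_2)|$ with $M_0=\jb{\tau+n^2}$, $M_1=\jb{\tau_1+n_1^2}$, $M_2=\jb{\tau_2+\g|n_2|n_2}$.

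\textbf{Non-resonant region $A$.} Following the heuristic of the introduction, split the spatial frequencies into $A=\{|R_n(n_2)|\gtrsim 1\}$ and the resonant set $A^c$. On $A$, \eqref{IN2} gives $\max(M_0,M_1,M_2)\gtrsim|n_2|$, producing a modulation gain of $|n_2|^{1/2-}$ after redistributing a small positive fraction of the $b-b'=1-$ weight across the three Bourgain factors. Combined with Bourgain's periodic $L^4$ Strichartz estimate (Lemma \ref{LEM:L4}) applied to $u$ (Schr\"odinger flow) and $v$ (Benjamin-Ono flow, possible since $|\g|\ne 0$), this yields the desired bilinear bound on $A$ at Sobolev regularity $s_1>0$, controlled purely by $\|u\|_{X^{s_1,b}}\|v\|_{X_\g^{s_1-1/2,b}}$ --- essentially the sharp estimate of \cite{AMP} restricted to $A$. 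The $X^{s_2,\infty,b'}$ output on $A$ is handled by further splitting $\jb{n}^{s_2}\lesssim \jb{n_1}^{s_2}+\jb{n_2}^{s_2}$ to transfer the extra $s_2-s_1$ of regularity to one of the inputs, at which point the $L^\infty_n$-components of the input norms enter.

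\textbf{Resonant region $A^c$.} On $A^c$ one has $n_2\approx 2n/(1+\g)$, hence $|n|\sim|n_1|\sim|n_2|$ (using $|\g|\ne 1$), and for each fixed $n$ there are at most two admissible values of $n_2$. For the $X^{s_1,b'}$ output I place $v$ in its upgraded norm $X_\g^{s_2-1/2,\infty,b}$: the $L^\infty_{n_2}$ structure trivially absorbs the O(1)-sparse resonant sum, and the effective spatial multiplier becomes $\jb{n}^{s_1}/(\jb{n_1}^{s_1}\jb{n_2}^{s_2-1/2})\sim\jb{n}^{1/2-s_2}$, summable in $n$ thanks to $s_2>1/2$; the remaining $\tau$-integration closes by Cauchy--Schwarz and the standard modulation $L^2$-integration (both valid since $b>1/2$). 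For the $X^{s_2,\infty,b'}$ output, with $n$ fixed the resonant sum is already finite; placing both $u$ and $v$ in their $L^\infty_n$-components reduces the spatial weight to $\jb{n}^{s_2}/(\jb{n_1}^{s_2}\jb{n_2}^{s_2-1/2})\sim\jb{n}^{1/2-s_2}\lesssim 1$.

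\textbf{Main obstacle.} The principal difficulty is the careful bookkeeping across the four resulting (output seminorm)$\times$(input-norm placement) subcases, and in particular the mixed non-resonant cases where the $L^2$-norm on one factor must be combined with the $L^\infty_n$-norm on the other. The condition $s_2<2s_1$ emerges precisely in the tightest such mixed case --- output at level $s_2$, $u$-input at level $s_1$, $v$-input at level $s_2-\frac{1}{2}$ --- where it ensures that after exploiting the modulation gain $|n_2|^{1/2-}$ and redistributing Sobolev weights, the residual power of $\jb{n}$ is strictly negative, so the trilinear form closes by Cauchy--Schwarz (or Schur's test) in the spatial variables. The time factor $T^{1-b+b'}=T^{0+}$ available from \eqref{DUHAMEL1}--\eqref{DUHAMEL4} is used at the contraction stage to absorb the small logarithmic losses hidden behind the ``$+$'' notation.
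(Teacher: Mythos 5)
Your overall architecture coincides with the paper's: the same splitting of the output norm into its $X^{s_1,b'}$ and $X^{s_2,\infty,b'}$ components, the same resonance lower bound $\MAX \gtrsim |n_2|\,|R_n(n_2)|$, the same dichotomy between the non-resonant set $A$ (handled by the modulation gain plus Bourgain's $L^4$ estimate) and the sparse resonant set $A^c$ (handled by the $L^\infty_n$-weighted components of the norms together with H\"older and Sobolev embedding in $t$, since for each $n$ there are only $O(1)$ resonant values of $n_2$). Your treatment of $A^c$ in both output components matches the paper; note only that no summability of $\jb{n}^{\frac12-s_2}$ over $n$ is needed or used there --- boundedness (i.e.\ $s_2\geq\frac12$) combined with the $O(1)$-sparseness of $A^c$ for each fixed $n$ suffices.

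The one place where your bookkeeping does not close as stated is the $X^{s_2,\infty,b'}$ output on the non-resonant set $A$, which is also exactly where the hypothesis $s_2<2s_1$ is consumed. The paper keeps \emph{both} inputs in their $L^2_n$-based components there, uses Young's inequality $L^2_{n_1}\ast L^2_{n_2}\hookrightarrow L^\infty_n$, and therefore must bound the multiplier $\jb{n}^{s_2}\jb{n_1}^{-s_1}\jb{n_2}^{-(s_1-\frac12)}\MAX^{-(\frac12-2\theta)}$; in the balanced regime $|n|\sim|n_1|\sim|n_2|$ with $\MAX\gtrsim\jb{n_2}$ this is $\jb{n}^{s_2-2s_1+2\theta}$, whence the condition $s_2\leq 2s_1-O(\theta)$. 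Your proposed alternative --- splitting $\jb{n}^{s_2}\lesssim\jb{n_1}^{s_2}+\jb{n_2}^{s_2}$ and invoking the $L^\infty_n$ components of the inputs --- does not pair correctly under convolution: to land in $L^\infty_n$ with one factor measured in $L^\infty_{n_j}L^2_\tau$ you would need the other factor in $L^1_{n}L^2_\tau$, which the hypotheses do not control on the full (non-sparse) set $A$. Relatedly, your stated ``tightest mixed case'' ($u$-input at level $s_1$, $v$-input at level $s_2-\frac12$) produces the residual exponent $s_2-s_1-(s_2-\frac12)-(\frac12-)=-s_1+$, i.e.\ it would only require $s_1>0$; the constraint $s_2<2s_1$ genuinely arises from the all-$L^2$ placement with the $v$-input at level $s_1-\frac12$. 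Repairing your plan amounts to reverting to the paper's placement in this one subcase.
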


\begin{proposition}  \label{PROP:Gibbsbilinear2}
Let $b, b', s_1, s_2$ be as in Proposition \ref{PROP:Gibbsbilinear1}.
Then, we have
\begin{equation} \label{Gibbsbilinear2}
\| \dx( u_1 \cj{u_2}) \|_{X_\g^{s_1-\frac{1}{2}, s_2-\frac{1}{2}, b'}} \lesssim \| u_1 \|_{X^{s_1, s_2, b}} \| u_2 \|_{X^{s_1, s_2, b}}.
\end{equation}
\end{proposition}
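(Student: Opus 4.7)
The plan is to follow the duality-plus-multiplier strategy used for Proposition \ref{PROP:Gibbsbilinear1} and sketched heuristically in the introduction, now adapted to the $\dx(u_1\cj{u_2})$ nonlinearity whose output lives on the BO dispersion curve. I would first expand the left-hand side of \eqref{Gibbsbilinear2} as
\[
\|\dx(u_1\cj{u_2})\|_{X_\g^{s_1-\frac12,\, s_2-\frac12,\, b'}}
= \|\dx(u_1\cj{u_2})\|_{X_\g^{s_1-\frac12,\, b'}}
+ \|\dx(u_1\cj{u_2})\|_{X_\g^{s_2-\frac12,\, \infty,\, b'}}
\]
and treat each piece by duality, reducing matters to the control of a trilinear form with multiplier
\[
M \;=\; \frac{\jb{n}^{\sigma+\frac12}}{\jb{n_1}^{s_1}\,\jb{n_2}^{s_1}}\cdot \frac{1}{\jb{L_0}^{-b'}\,\jb{L_1}^{b}\,\jb{L_2}^{b}},
\]
where $n=n_1-n_2$, $\tau=\tau_1-\tau_2$, $L_0=\jb{\tau+\g|n|n}$, $L_j=\jb{\tau_j+n_j^2}$, and $\sigma\in\{s_1,s_2\}$ according to which summand is being estimated, with the understanding that the input weights $\jb{n_i}^{s_1}$ may be upgraded to $\jb{n_i}^{s_2}$ by invoking the $X^{s_2,\infty,b}$ component of the modified norm on $u_i$ when needed. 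The driving algebraic identity is
\[
\max(L_0,L_1,L_2) \gtrsim \bigl|\g|n|n-n_1^2+n_2^2\bigr| = |n|\cdot\bigl|\g|n|-(n_1+n_2)\bigr|,
\]
which cleanly splits frequency space into
\[
A = \bigl\{|\g|n|-(n_1+n_2)|\ge 1\bigr\}, \qquad
B = \bigl\{|\g|n|-(n_1+n_2)|<1\bigr\}.
\]

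On $A$, $\max L \gtrsim |n|$ (trivially $\gtrsim 1$ when $|n|$ is small), so extracting an $|n|^{1/2-}$ of modulation gain shrinks the remaining symbol to $\jb{n}^{\sigma+}\jb{n_1}^{-s_1}\jb{n_2}^{-s_1}$. Under the standard trichotomy on $(|n_1|,|n_2|)$, this is $\lesssim 1$ provided $s_1>0$; in the high-high sub-case $|n_1|\sim|n_2|$ one needs the sharper $\sigma<2s_1$, which is automatic for $\sigma=s_1>0$ and is precisely the standing hypothesis $s_2<2s_1$ for $\sigma=s_2$. I would then close with Bourgain's periodic $L^4$-Strichartz (Lemma \ref{LEM:L4}) via $L^4\cdot L^4\hookrightarrow L^2$ on two of the three factors, in the spirit of \cite{BO1,KPV4}. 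On $B$, the simultaneous constraints $n_1-n_2=n$ and $n_1+n_2\approx\g|n|$, together with $|\g|\ne 0,1$, pin $(n_1,n_2)$ to at most two values per $n$ and force $|n_1|\sim|n_2|\sim|n|$; no modulation gain is available. For the $\sigma=s_1$ summand I place one input in $X^{s_2,\infty,b}$ and exploit the $\jb{n_i}^{s_2-s_1}$ gain (with $s_2>\tfrac12$) to absorb $\jb{n}^{s_1+\frac12}$, recovering the outer $\ell^2_n$ sum by Cauchy--Schwarz using the $O(1)$-size of the resonant set. For the $\sigma=s_2$ summand the outer $n$-sum is absent, so I put \emph{both} inputs in $X^{s_2,\infty,b}$; the residual symbol is $\jb{n}^{\frac12-s_2}$, bounded since $s_2>\tfrac12$, and a $\tau$-Cauchy--Schwarz against the $\jb{L_0}^{b'}$ weight closes the estimate.

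The main obstacle I foresee is the high-high sub-case on $A$ in the $L^\infty_n$ output analysis, where $s_2<2s_1$ becomes a tight arithmetic constraint and the modulation gain must be traded very carefully against the output weight. One must also keep track of how the implicit constants depend on $\operatorname{dist}(|\g|,1)$, confirming that $|\g|\ne 1$ is genuinely essential. Every other contribution reduces to routine $L^4$-Strichartz bookkeeping.
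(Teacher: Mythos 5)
Your overall architecture coincides with the paper's: reduce each of the two summands of the $X_\g^{s_1-\frac12,s_2-\frac12,b'}$ norm to a trilinear form, split frequency space according to the size of the resonance function $\g|n|n-n_1^2+n_2^2=n\big(\g|n|-(n_1+n_2)\big)$, close the non-resonant region with the periodic $L^4$ Strichartz estimate, and on the $O(1)$-sized resonant set trade the absent modulation gain for the $L^\infty_n$-weighted components of the modified norms. However, your non-resonant analysis has a concrete gap. You claim that on $A$ the uniform gain $\max L\gtrsim|n|$ reduces the symbol to $\jb{n}^{\sigma+}\jb{n_1}^{-s_1}\jb{n_2}^{-s_1}$, and that this is $\lesssim1$ for $s_1>0$ except in the high-high case. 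That fails in the unbalanced regimes: if, say, $|n_2|\ll|n|\sim|n_1|$, the reduced symbol is $\sim\jb{n}^{\sigma-s_1+}\jb{n_2}^{-s_1}$, which is already unbounded for $\sigma=s_1$ (it is $\jb{n}^{0+}$ when $|n_2|\lesssim1$) and grows like $\jb{n}^{s_2-s_1+}$ for $\sigma=s_2$. What saves the estimate --- and what the paper uses --- is that in these unbalanced cases the resonance factor is itself large: $\big|\g|n|-(n_1+n_2)\big|=\big|(\g\,\mathrm{sgn}(n)\pm1)n-2n_j\big|\sim|n|$ precisely because $|\g|\ne1$, so $\max L\gtrsim|n|^2$ there; extracting $|n|^{1-4\theta}$ leaves $\jb{n}^{\sigma-s_1-\frac12+4\theta}\jb{n_2}^{-s_1}\lesssim1$ under the additional constraint $s_2\le s_1+\frac12-4\theta$, which appears in the paper's hypotheses but is absent from your account. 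This is also where $|\g|\ne1$ enters the \emph{non-resonant} analysis, not only the resonant one.

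A smaller point: on the resonant set for the $\sigma=s_2$ piece you propose to close with a $\tau$-Cauchy--Schwarz against the $\jb{L_0}^{b'}$ weight. Since $b'=-\frac12+2\theta>-\frac12$, the integral $\int\jb{\tau}^{2b'}\,d\tau$ diverges and that step does not close as stated. The standard substitute, and the one the paper uses, is to discard $\jb{L_0}^{b'}\le1$ and estimate the $\tau$-convolution by H\"older in $t$ together with the one-dimensional Sobolev embedding $H^{1/4}_t\hookrightarrow L^4_t$ applied to each input (equivalently, Cauchy--Schwarz against one of the input weights $\jb{L_j}^{-b}$ with $b>\frac12$). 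With these two repairs your outline matches the paper's proof.
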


More precisely, for given $\theta = 0+$, choose $b = \frac{1}{2} + \theta$ 
and $b'  -\frac{1}{2} + 2\theta$.
We prove \eqref{Gibbsbilinear1} for $s_1 \geq 2 \theta$, $s_2 \geq \frac{1}{2}$,  
$s_2 \leq \min ( 2 s_1 - 4\theta, s_1 + \frac{1}{2} -2\theta)$, and
 \eqref{Gibbsbilinear2} for $s_1 \geq 2 \theta$, $s_2 \geq \frac{1}{2}$, 
$s_2 \leq \min ( 2 s_1 - 2\theta, s_1 + \frac{1}{2} -4\theta)$.
Note that for $s_1 < \frac{1}{2}$, it is enough to take $s_2 \leq 2 s_1 - 4\theta$ in both cases.

Note that we have $ 1- b + b' = \theta > 0$.
Thus, the linear estimates \eqref{LINEAR1} $\sim$ \eqref{DUHAMEL4}
yield a small positive power of $T$ and 
establish the contraction property of $\Phi^t_{(u_0, v_0)} (\cdot, \cdot)$ for $|t| \leq T \ll 1$.
As a

Before proving of Propositions \ref{PROP:Gibbsbilinear1} and \ref{PROP:Gibbsbilinear2}, 
first recall the $L^4$ Strichartz estimate due to Bourgain \cite{BO1}.
Also, see Molinet \cite{MO}.

\begin{lemma} \label{LEM:L4}
Let $\g \ne 0$. Then, we have
\[ \|u\|_{L^4_{x, t} }\lesssim \|u\|_{X^{0, \frac{3}{8}}},  \text { and } \ \|v\|_{L^4_{x, t} }\lesssim \|v\|_{X_\g^{0, \frac{3}{8}}}.\]
\end{lemma}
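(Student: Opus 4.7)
Both estimates follow from the same template, with the only difference being the shape of the phase ($n^2$ versus $\g|n|n$), so I would treat the Schr\"odinger case in full and indicate the modification for Benjamin--Ono. By squaring and using $\|u\|_{L^4}^2 = \|u \cdot u\|_{L^2}$, it suffices to prove the bilinear estimate
\[
\| u_1 u_2 \|_{L^2_{x,t}} \lesssim \| u_1 \|_{X^{0, 3/8}} \| u_2 \|_{X^{0, 3/8}}.
\]
Setting $f_i(n_i,\tau_i) = \jb{\tau_i+n_i^2}^{3/8}|\ft{u_i}(n_i,\tau_i)|$, the Plancherel identity reduces this to bounding
\[
\Big\| \sum_{n_1+n_2=n} \int_{\tau_1+\tau_2=\tau} \frac{f_1(n_1,\tau_1)\,f_2(n_2,\tau_2)}{\jb{\tau_1+n_1^2}^{3/8}\jb{\tau_2+n_2^2}^{3/8}}\,d\tau_1 \Big\|_{L^2_{n,\tau}}
\]
by $\|f_1\|_{L^2}\|f_2\|_{L^2}$. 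Applying Cauchy--Schwarz in the $(n_1,\tau_1)$ integration for each $(n,\tau)$, the problem collapses to the uniform bound
\[
\sup_{n,\tau} \; \sum_{n_1+n_2=n}\int_{\R} \frac{d\tau_1}{\jb{\tau_1+n_1^2}^{3/4}\jb{\tau-\tau_1+n_2^2}^{3/4}} \lesssim 1.
\]

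\textbf{Execution of the counting step.} I would first dispatch the $\tau_1$-integral by the standard calculus estimate $\int_\R \jb{\tau_1-a}^{-3/4}\jb{\tau_1-b}^{-3/4}\,d\tau_1 \lesssim \jb{a-b}^{-1/2}$, leaving
\[
\sup_{n,\tau}\; \sum_{n_1+n_2=n} \frac{1}{\jb{\tau+n_1^2+n_2^2}^{1/2}} \lesssim 1
\]
(with an understood $\eps$-improvement absorbed into reading $\tfrac{3}{8}$ as $\tfrac{3}{8}+$). Substituting $n_2=n-n_1$ and writing $n_1^2+n_2^2 = \tfrac{1}{2}(2n_1-n)^2 + \tfrac{n^2}{2}$, I set $m = 2n_1-n \in \Z$, reducing to $\sum_{m\in\Z}\jb{A + \tfrac{1}{2}m^2}^{-1/2}$ for $A = \tau+n^2/2$. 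A dyadic decomposition $\jb{A+\tfrac12 m^2}\sim 2^j$ on an interval $|A+\tfrac12 m^2|\sim 2^j$ traps $m^2$ in an interval of length $O(2^j)$, hence the number of admissible integer $m$ is $\lesssim 2^{j/2}$; each dyadic scale thus contributes $O(1)$, and the geometric tail summation yields the claimed uniform bound (with a mild $\eps$-loss in $b$, harmless for the way the lemma is used).

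\textbf{The Benjamin--Ono case.} With the phase $\g|n|n$, the same reduction gives
\[
\sup_{n,\tau}\; \sum_{n_1+n_2=n}\frac{1}{\jb{\tau+\g|n_1|n_1+\g|n_2|n_2}^{1/2}}.
\]
I would split according to the signs of $n_1,n_2$. When $\mathrm{sgn}(n_1) = \mathrm{sgn}(n_2)$, one has $\g|n_1|n_1+\g|n_2|n_2 = \pm\g(n_1^2+n_2^2)$, and the Schr\"odinger count above applies verbatim (after absorbing $\g\ne 0$ into $\tau$). When $\mathrm{sgn}(n_1)\ne \mathrm{sgn}(n_2)$, the resonance function telescopes to the linear expression $\g(n_1-n_2)n = \g(2n_1-n)n$, so for fixed $n\ne 0$ the shifted argument takes values on an arithmetic progression with common difference $2|\g n|$, and a direct dyadic sum gives $\sum_{n_1}\jb{\tau+\g(2n_1-n)n}^{-1/2}\lesssim 1$ uniformly. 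The case $n=0$ is trivial since then $n_2=-n_1$ forces $\g|n_1|n_1+\g|n_2|n_2 = 0$ and only one term survives after the same-sign / opposite-sign split. Combining both regimes proves the second estimate.

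\textbf{Main obstacle.} The heart of the argument in either case is the lattice-counting step: we need the quadratic (or, in the mixed-sign BO regime, effectively linear) shape of the phase to guarantee $O(2^{j/2})$ lattice points per dyadic shell. This is precisely why $\g\ne 0$ is required (for $\g=0$ the BO phase is constant in $n$ and the bound fails), and it is the step that forces the exponent $\tfrac38$ to be essentially sharp; doing the count carefully enough to keep the implicit constant independent of $n,\tau$ is where the real work lies.
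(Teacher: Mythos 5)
The paper does not actually prove this lemma; it is quoted from Bourgain \cite{BO1} (Schr\"odinger) and Molinet \cite{MO} (Benjamin--Ono). Your opening reduction via $\|u\|_{L^4}^2=\|u^2\|_{L^2}$ and Cauchy--Schwarz is the standard first move, but the counting bound it leads to is false, so the argument has a genuine gap. After the $\tau_1$-integration you need
\[
\sup_{n,\tau}\ \sum_{m}\Big\langle \tau+\tfrac{n^2}{2}+\tfrac{m^2}{2}\Big\rangle^{-1/2}\lesssim 1,
\]
but this sum \emph{diverges}: the terms decay only like $\jb{m}^{-1}$ (take $\tau=-n^2/2$ to see $\sum_m\jb{m^2/2}^{-1/2}\sim\sum_m\jb{m}^{-1}=\infty$). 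The error is in the dyadic accounting: each shell $\jb{A+\tfrac12 m^2}\sim 2^j$ does contain $O(2^{j/2})$ integers and hence contributes $O(1)$, but there are infinitely many shells, so there is no ``geometric tail'' to sum. This is precisely why the symmetric weighting $\s_1^{-3/8}\s_2^{-3/8}$ cannot reach the endpoint $b=\tfrac38$: the known proofs decompose \emph{both} factors dyadically in modulation and use $\|P_ju\,P_ku\|_{L^2}\lesssim (2^{\min(j,k)})^{1/2}(2^{\max(j,k)/2}+1)^{1/2}\|P_ju\|_{L^2}\|P_ku\|_{L^2}$, whose $\min/\max$ asymmetry yields the off-diagonal gain $2^{-|j-k|/8}$ needed to sum exactly at $b=\tfrac38$. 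With $\tfrac38$ replaced by $\tfrac38+$ your argument does close for the Schr\"odinger half (and that weaker statement would indeed suffice for how the lemma is used in Propositions \ref{PROP:Gibbsbilinear1}--\ref{PROP:Gibbsbilinear2}), but it does not prove the lemma as stated.

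The Benjamin--Ono half is worse, and the $\eps$-loss does not rescue it. In the mixed-sign regime the resonance is only \emph{linear} in $n_1$: for fixed $n\ne 0$ the sum $\sum_{n_1}\jb{\tau+\g n(2n_1-n)}^{-\frac12-\eps}$ behaves like $\sum_{n_1}|n_1|^{-\frac12-\eps}=\infty$. An arithmetic progression with bounded gap puts $\sim 2^j/|\g n|$ lattice points in the shell of size $2^j$, not $2^{j/2}$, so your claim that ``a direct dyadic sum gives $\lesssim 1$ uniformly'' is false. The case $n=0$ is not trivial either: there $n_2=-n_1$ makes the resonance vanish \emph{identically}, so every $n_1$ survives and the sum is infinite; this mode must be handled separately (e.g.\ by an $\ell^1_{n_1}$ summation exploiting that only a single output frequency is involved). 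In short, the Benjamin--Ono estimate is genuinely more delicate than a ``verbatim'' transfer of the Schr\"odinger count, which is why the paper defers to Molinet's proof.
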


\begin{proof} [Proof of Proposition \ref{PROP:Gibbsbilinear1}]  
In the proof, we use $(n, \tau)$, $(n_1, \tau_1)$, $(n_2, \tau_2)$ to denote the Fourier variables 
for $uv$, $u$, $v$, respectively.
i.e. we have $n = n_1 + n_2$ and $\tau = \tau_1 + \tau_2$ with $n, n_1, n_2 \in \Z$ and $\tau, \tau_1, \tau_2 \in \R$.
Also, let $\s = \jb{\tau+ n^2}$, $\s_1 = \jb{\tau_1 + n_1^2}$, and $\s_2 = \jb{\tau_2 + \g|n_2|n_2}$.
Then, as in \eqref{IN2}, we have  
\begin{equation} 
\MAX := \max( \s,  \s_1,  \s_2) 
 \gtrsim 1 + |n_2| | R_n(n_2) |,
\end{equation}

\noindent
where $R_n(n_2) = (\g \, \text{sgn}(n_2) + 1)n_2 - 2n$.
Note that for fixed $n\in \Z$, we have  $R_n(n_2) = 0$ when $n_2 = \frac{2n}{1 + \g \text{sgn}(n_2)}$, 
(which may not be an integer.)
Hence, we have $|R_n(n_2)| \gtrsim 1$ for 
$n_2 \ne \Big[\frac{2n}{1 + \g \text{sgn}(n_2)}\Big], \Big[\frac{2n}{1 + \g \text{sgn}(n_2)}\Big]+ 1$.

\noindent
$\bullet$ {\bf Part 1:} 
We first prove \eqref{Gibbsbilinear1} for the $X^{s_1, b'}$ part of the $X^{s_1, s_2, b'}$ norm.

Define the bilinear operator $\mathcal{B}_{s_1, \theta} (\cdot, \cdot)$ by
\begin{equation} \label{B1}
\mathcal{B}_{s_1, \theta} (f, g) (n, \tau) = \frac{1}{2\pi} \sum_{n = n_1+ n_2} \intt_{\tau = \tau_1+ \tau_2}
\frac{\jb{n}^{s_1}}{\jb{n_1}^{s_1}\jb{n_2}^{s_1-\frac{1}{2}}} \frac{f(n_1 , \tau_1) g(n_2, \tau_2)}
{\s^{\frac{1}{2}-2\theta} \s_1^{\frac{1}{2}+\theta} \s_2^{\frac{1}{2}+\theta}} d\tau_1.
\end{equation}

\noindent
Then, it suffices to prove 
\begin{equation} \label{DUAL1}
\|\mathcal{B}_{s_1, \theta} (f, g)\|_{L^2_{n, \tau}} \lesssim \|f\|_{L^2_{n, \tau}}\|g\|_{L^2_{n, \tau}}.
\end{equation}

Let $A = \big\{ (n, n_1, n_2) : n = n_1 + n_2, \ \big|n_2 - \frac{2n}{1+\g \text{sgn}(n_2) }\big| \geq 1\big\}$
Then, on $A$, we have $\MAX \gtrsim \jb{n_2}$.
Hence, if $|n_1| \gtrsim |n|$, then we have 
\begin{equation} \label{NONRES1}
\frac{\jb{n}^{s_1}}{\jb{n_1}^{s_1}\jb{n_2}^{s_1-\frac{1}{2}}}
\frac{1}{\s^{\frac{1}{2}-2\theta} \s_1^{\frac{1}{2}+\theta} \s_2^{\frac{1}{2}+\theta}}
\leq \frac{\jb{n}^{s_1}}{\jb{n_1}^{s_1}\jb{n_2}^{s_1-\frac{1}{2}} \MAX^{\frac{1}{2} -2\theta}}
\lesssim \frac{\jb{n}^{s_1}}{\jb{n_1}^{s_1}\jb{n_2}^{s_1-2\theta}}
\lesssim 1
\end{equation}

\noindent
for $s_1 \geq 2 \theta$.
Now, suppose $|n_1| \ll |n|$. Then, we have $|n_2| \sim |n|.$
Moreover, we have
\begin{equation} \label{RN1}
|R_n(n_2)| = |(1+\wt{\g})n_2 - 2n| \geq |2-(1+\wt{\g})| |n| - |(1+\wt{\g}) n_1| \sim |n|\sim |n_2|.
\end{equation}

\noindent
where 
$\wt{\g} =  \g \, \text{sgn}(n_2) $.
Thus, $\MAX \gtrsim \jb{n_2}^2$ in this case, and we have
\begin{equation} \label{NONRES2}
\frac{\jb{n}^{s_1}}{\jb{n_1}^{s_1}\jb{n_2}^{s_1-\frac{1}{2}}}
\frac{1}{\s^{\frac{1}{2}-2\theta} \s_1^{\frac{1}{2}+\theta} \s_2^{\frac{1}{2}+\theta}}
\lesssim \frac{\jb{n}^{s_1}}{\jb{n_1}^{s_1}\jb{n_2}^{s_1 + \frac{1}{2}-4\theta}}
\lesssim 1
\end{equation}

\noindent
for $s_1 \geq 0$ (with $\theta$ sufficiently small. i.e. $\theta \leq \frac{1}{8}$.)

If $\MAX = \s$, then by H\"older inequality and Lemma \ref{LEM:L4}, we have 
\[\|\mathcal{B}_{s_1, \theta} (f, g)\|_{L^2_{n, \tau}} 
\leq \big\| \big(\s_1^{ - \frac{1}{2}-\theta} f \big)^{\vee}\big\|_{L^4_{x, t}}
\big\| \big(\s_2^{- \frac{1}{2}-\theta} g \big)^{\vee}\big\|_{L^4_{x, t}} 
\lesssim \|f\|_{L^2_{n, \tau}}\|g\|_{L^2_{n, \tau}}\]

\noindent
on $A$.
If $\MAX = \s_1$, then  by H\"older inequality and Lemma \ref{LEM:L4}, we have, 
for any $h \in L^2_{n, \tau}$, 
\[ \jb{\mathcal{B}_{s_1, \theta} (f, g), h}_{L^2_{n, \tau} }
\leq \|f\|_{L^2_{n, \tau}}  \big\| \big(\s_2^{ - \frac{1}{2}-\theta} g \big)^{\vee}\big\|_{L^4_{x, t}}
\big\| \big(\s^{ - \frac{1}{2} + 2\theta} h \big)^{\vee}\big\|_{L^4_{x, t}}
 \lesssim \|f\|_{L^2_{n, \tau}}\|g\|_{L^2_{n, \tau}} \|h\|_{L^2_{n, \tau}}\]

\noindent
on $A$ and this is equivalent to \eqref{DUAL1} via duality.
A similar computation holds when $\MAX = \s_2$.
Hence, \eqref{DUAL1} holds on $A$ for $s_1 \geq 2 \theta$.

Now, we will consider the estimate on $A^c$, i.e. near the resonances.
\noindent
In this case, we show 
\begin{equation} \label{DUAL2}
\big\|\wt{\mathcal{B}}_{s_1, s_2, \theta} (f, g)\big\|_{L^2_{n, \tau}} \lesssim \|f\|_{L^2_{n, \tau}}\|g\|_{L^\infty_nL^2_{ \tau}},
\end{equation}

\noindent
where $\wt{\mathcal{B}}_{s_1, s_2, \theta}(\cdot, \cdot)$ is given by
\begin{equation} \label{B2}
\wt{\mathcal{B}}_{s_1, s_2, \theta}(f, g) (n, \tau) = \frac{1}{2\pi} \sum_{n = n_1+ n_2} \intt_{\tau = \tau_1+ \tau_2}
\frac{\jb{n}^{s_1}}{\jb{n_1}^{s_1}\jb{n_2}^{s_2-\frac{1}{2}}} \frac{f(n_1 , \tau_1) g(n_2, \tau_2)}
{\s^{\frac{1}{2}-2\theta} \s_1^{\frac{1}{2}+\theta} \s_2^{\frac{1}{2}+\theta}} d\tau_1.
\end{equation}

\noindent
First, note that since $|\g| \ne 1$, it follows that $|n| \sim |n_1| \sim |n_2|$ on $A^c$.
Thus, we have
$ \frac{\jb{n}^{s_1}}{\jb{n_1}^{s_1}\jb{n_2}^{s_2-\frac{1}{2}}} \sim \jb{n}^{\frac{1}{2} - s_2} \lesssim 1$
for $s_2 \geq \frac{1}{2}$.
On $A^c$, we can not expect any contribution from $\s$, $\s_1$, $\s_2$.
However, for fixed $n$, there are only finitely many (2 or 4) values of $n_2$ in $A^c$.
i.e. there is virtually no sum over $n_2$ in this case.
Let $F(n_1, \tau_1 ) = \s_1^{-\frac{1}{2}-\theta} f(n_1, \tau_1) $ and $G(n_2, \tau_2 ) = \s_2^{-\frac{1}{2}-\theta} g(n_2, \tau_2) $.
Then,  by H\"older inequality in $t$ and Sobolev inequality, we have
\begin{align}  \label{HOLDER1}
\bigg\|\intt_{\tau = \tau_1 + \tau_2} & F(n_1, \tau_1)  G(n_2, \tau_2) d\tau_1\bigg\|_{L^2_{ \tau}} 
\leq  \big\| \big(F(n_1, \cdot) \big)^{\vee_t} \big\|_{L^4_t} \big\| \big(G(n_2, \cdot) \big)^{\vee_t} \big\|_{L^4_t} \notag \\
& = \big\| e^{i n_1^2 t} \big(F(n_1, \cdot) \big)^{\vee_t} (t) \big\|_{L^4_t} \big\| e^{i \g |n_2|n_2 t} \big(G(n_2, \cdot) \big)^{\vee_t} (t) \big\|_{L^4_t}  \\
& \lesssim \big\| U(-t) \big(F(n_1, \cdot) \big)^{\vee_t} \big\|_{H^\frac{1}{4}_t}
\big\| V(-t) \big(G(n_2, \cdot) \big)^{\vee_t} \big\|_{H^\frac{1}{4}_t}. \notag 
\end{align}

\noindent
Then, for $n_2 =  \Big[\frac{2n}{1 + \g \text{sgn}(n_2)}\Big]$ or $ \Big[\frac{2n}{1 + \g \text{sgn}(n_2)}\Big]+ 1$,
we have,
\begin{align*} 
\text{LHS  of }&  \eqref{DUAL2} 
\lesssim \bigg\|\intt_{\tau = \tau_1 + \tau_2} F(n_1, \tau_1) G(n_2, \tau_2) d\tau_1\bigg\|_{L^2_{n, \tau}} \\
&\lesssim \| \jb{\tau + n_1^2}^\frac{1}{4} F(n_1, \tau) \|_{L^2_{n_1, \tau}} \| \jb{\tau + \g|n_2| n_2}^\frac{1}{4} G(n_2, \tau) \|_{L^\infty_{n_2} L^2_\tau} 
\leq \| f\|_{L^2_{n, \tau}} \|g\|_{L^\infty_nL^2_\tau}.
\end{align*}

\noindent
$\bullet$ {\bf Part 2:} Next, we prove \eqref{Gibbsbilinear1} for the $X^{s_2, \infty, b'}$ part of the $X^{s_1, s_2, b'}$ norm.

For $|n|\lesssim 1$, we have $L^\infty_n$ norm $\sim$ $L^2_n$ norm
and $\jb{n}^{s_2} \sim \jb{n}^{s_1}$, 
i.e. the proof reduces to Part 1.
Hence, we assume $|n| \gtrsim 1$.

On $A$, we have $\MAX \gtrsim \jb{n_2}$. 
Thus, if $|n_1|, |n_2| \gtrsim |n|$, then we have
\begin{equation} \label{NONRES3}
\frac{\jb{n}^{s_2}}{\jb{n_1}^{s_1}\jb{n_2}^{s_1-\frac{1}{2}}}
\frac{1}{\s^{\frac{1}{2}-2\theta} \s_1^{\frac{1}{2}+\theta} \s_2^{\frac{1}{2}+\theta}}
\lesssim \frac{\jb{n}^{s_2}}{\jb{n_1}^{s_1}\jb{n_2}^{s_1-2\theta}}
\lesssim 1
\end{equation}

\noindent
for $2 s_1 \geq s_2 + 2\theta$.
On the other hand, if $|n_1| \ll |n|$, we have $|n_2| \sim |n|$ and 
$|R_n(n_2)|\sim |n|$
as in \eqref{RN1}.
i.e. $\MAX \sim \jb{n}^2$.
Then, we have 
$\text{LHS of } \eqref{NONRES3} 
\lesssim \frac{\jb{n}^{s_2}}{\jb{n_1}^{s_1}\jb{n}^{2s_1-4\theta}}
\lesssim 1$ 
for $2 s_1 \geq s_2 + 4\theta$.
Also, if $|n_2| \ll |n|$, then  we have $|n_1| \sim |n|$ and 
$|R_n(n_2)|\sim |n|$.
i.e. $\MAX \sim \jb{n}\jb{n_2}$.
Then, we have
$\text{LHS of } \eqref{NONRES3} 
\lesssim \frac{\jb{n}^{s_2}}{\jb{n_1}^{s_1}\jb{n_2}^{s_1-2\theta}\jb{n}^{\frac{1}{2}-2\theta}}
\lesssim 1$ 
for $s_1 \geq 2\theta$ and $s_1 + \frac{1}{2} \geq s_2 + 2\theta$.

In this case, it suffices to show 
\begin{equation} \label{DUAL3}
\big\|\mathcal{B}'_{s_1, s_2, \theta} (f, g)\big\|_{L^{\infty}_n L^2_\tau} \lesssim \|f\|_{L^2_{n, \tau}}\|g\|_{L^2_{n,  \tau}},
\end{equation}

\noindent
where $\mathcal{B}_{s_1, s_2, \theta}(\cdot, \cdot)$ is given by
\begin{equation} \label{B3}
\mathcal{B}'_{s_1, s_2, \theta}(f, g) (n, \tau) = \frac{1}{2\pi} \sum_{n = n_1+ n_2} \intt_{\tau = \tau_1+ \tau_2}
\frac{\jb{n}^{s_2}}{\jb{n_1}^{s_1}\jb{n_2}^{s_1-\frac{1}{2}}} \frac{f(n_1 , \tau_1) g(n_2, \tau_2)}
{\s^{\frac{1}{2}-2\theta} \s_1^{\frac{1}{2}+\theta} \s_2^{\frac{1}{2}+\theta}} d\tau_1.
\end{equation}

\noindent
Suppose $\MAX = \s$. Then, from \eqref{HOLDER1} and Young's inequality in $n$, we have
\begin{align*}
\text{LHS of } & \eqref{DUAL3}
\lesssim \bigg\|\sum_{n = n_1 + n_2}\intt_{\tau = \tau_1 + \tau_2} F(n_1, \tau_1) G(n_2, \tau_2) d\tau_1\bigg\|_{L^\infty_nL^2_{ \tau}} \\
&\lesssim \| \jb{\tau + n_1^2}^\frac{1}{4} F(n_1, \tau) \|_{L^2_{n_1, \tau}} \| \jb{\tau + \g|n_2| n_2}^\frac{1}{4} G(n_2, \tau) \|_{L^2_{n_2, \tau}} 
\leq \| f\|_{L^2_{n, \tau}} \|g\|_{L^2_{n, \tau}},
\end{align*}

\noindent
where $F$ and $G$ are as in Part 1.
If $\MAX = \s_1$, then by H\"older inequality in $t$, Young's inequality in $n$,
 and Sobolev inequality (as in \eqref{HOLDER1}), we have
\begin{align*} \jb{ \mathcal{B}'& _{s_1,  s_2, \theta}  (f, g) , h}_{L^2_{n, \tau}}
\lesssim \big\langle f \ast (\s_2^{ - \frac{1}{2}-\theta}g), \,
\s^{- \frac{1}{2} + 2\theta} h \big\rangle_{L^2_{n, \tau}} \\
& \leq \|f\|_{L^2_{n, \tau}} \big\|(\s_2^{ - \frac{1}{2}-\theta}g)^{\vee_t}\|_{L^2_n L^4_t}
\big\|(\s^{ - \frac{1}{2}+2\theta}h)^{\vee_t}\|_{L^1_n L^4_t}
\lesssim \|f\|_{L^2_{n, \tau}}\|g\|_{L^2_{n, \tau}}\|h\|_{L^1_n L^2_\tau}
\end{align*}

\noindent
for any $h \in L^1_n L^2_\tau$,
and this is equivalent to \eqref{DUAL3} via duality.
A similar computation holds when $\MAX = \s_2$.
Hence, \eqref{DUAL3} holds on $A$ for $s_1 \geq 2 \theta$ 
and $s_2 \leq \min( 2 s_1 - 4 \theta, s_1 + \frac{1}{2} - 2\theta)$.

On $A^c$, we have $|n|\sim |n_1| \sim|n_2|$.
Thus, $\frac{\jb{n}^{s_2}}{\jb{n_1}^{s_2}\jb{n_2}^{s_2 -\frac{1}{2}}} \lesssim 1$ for $s_2 \geq \frac{1}{2}$.
Also, recall that for fixed $n$, there are only finitely many (2 or 4) values values of $n_2$ on $A^c$.
i.e. there is no sum over $n_2$ in this case. 
Then, as in \eqref{HOLDER1}, we have 
\begin{align*}
\bigg\| \frac{1}{2\pi} & \sum_{n = n_1+ n_2} \intt_{\tau = \tau_1+ \tau_2}
\frac{\jb{n}^{s_2}}{\jb{n_1}^{s_2}\jb{n_2}^{s_2-\frac{1}{2}}} \frac{f(n_1 , \tau_1) g(n_2, \tau_2)}
{\s^{\frac{1}{2}-2\theta} \s_1^{\frac{1}{2}+\theta} \s_2^{\frac{1}{2}+\theta}} d\tau_1 \bigg\|_{L^\infty_n L^2_\tau} \\
&\lesssim \bigg\| \intt_{\tau = \tau_1 + \tau_2} F(n_1, \tau_1) G(n_2, \tau_2) d \tau_1 \bigg\|_{L^\infty_n L^2_\tau} \\
&\lesssim \| \jb{\tau + n_1^2}^\frac{1}{4} F(n_1, \tau) \|_{L^\infty_{n_1} L^2_\tau} \| \jb{\tau + \g|n_2| n_2}^\frac{1}{4} G(n_2, \tau) \|_{L^\infty_{n_2} L^2_\tau} 
\leq \| f\|_{L^\infty_n L^2_\tau} \|g\|_{L^\infty_nL^2_\tau},
\end{align*}

\noindent
where $F$ and $G$ are as in Part 1.
This completes the proof of Proposition \ref{PROP:Gibbsbilinear1}.
\end{proof}

\begin{proof} [Proof of Proposition \ref{PROP:Gibbsbilinear2}]

Noting that $|n| \jb{n}^{r_1-\frac{1}{2}} \leq \jb{n}^{r_1+\frac{1}{2}}$, 
our goal is to show the boundedness of the multilinear functional $\mathcal{I}_{r_1, r_2, r_3, \theta}$ given by
\begin{align*}
\mathcal{I}_{r_1, r_2, r_3, \theta} = \frac{1}{4\pi^2} \sum_{\substack{n, n_1 \\ n = n_1+ n_2}} \intt_{\tau = \tau_1 + \tau_2} 
M_{r_1, r_2, r_3, \theta} \, f(n_1, \tau_1) g(n_2, \tau_2)h(n. \tau)   d\tau_1 d \tau,
\end{align*}

\noindent
for $h \in L^2_{n, \tau}$ or $L^1_n L^2_\tau$ 
and $r_j = s_1$ or $s_2$ with  $j = 1, 2, 3$, 
where 
\[M_{r_1, r_2, r_3, \theta} = \frac{\jb{n}^{r_1+\frac{1}{2}}} {\jb{n_1}^{r_2} \jb{n_2}^{r_3}} 
\frac{1} {\wt{\s}^{\frac{1}{2}-2\theta}\wt{\s}_1^{\frac{1}{2}+\theta}\wt{\s}_2^{\frac{1}{2}+\theta}}\]

\noindent
with
$\wt{\s} = \jb{\tau + \g |n|n}$, $\wt{\s}_1 = \jb{\tau_1 + n_1^2}$, and $\wt{\s}_2 = \jb{\tau_2 - n_2^2}$.
From a computation analogous to  \eqref{IN2}, we have
\[ \MAX := \max( \wt{\s}, \wt{\s}_1, \wt{\s}_2) \gtrsim 1+ |n| |\wt{R}_{n_1}(n)|, \]

\noindent
where 
$\wt{R}_{n_1}(n) = (\g \text{sgn}(n) + 1) n - 2n_1$.
Note that for fixed $n_1 \in \Z$, we have $\wt{R}_{n_1}(n) = 0$
when $n = \frac{2 n_1}{ 1+ \g \text{sgn}(n)}$.
Thus, we have  $|\wt{R}_{n_1}(n)| \gtrsim 1$ 
for $n \ne \Big[\frac{2 n_1}{ 1+ \g \text{sgn}(n)}\Big], \Big[ \frac{2 n_1}{ 1+ \g \text{sgn}(n)}\Big] + 1$.
We indicate how this proposition follows as a corollary to (the proof of) Proposition \ref{PROP:Gibbsbilinear1},
basically by replacing  $(n, \tau)$, $(n_1, \tau_1)$, $(n_2, \tau_2) $ here 
with $(n_2, \tau_2)$, $(n, \tau)$, $(-n_1, -\tau_1)$. 

First, consider the $X_\g^{s_1 -\frac{1}{2},\, b'}$ part of \eqref{Gibbsbilinear2}.
Let $B = \big\{ (n, n_1, n_2) :  n = n_1 + n_2, \ \big|n -\frac{2 n_1}{ 1+ \g \text{sgn}(n)}\big| \geq 1\big\}$.
On $B$, we have $\MAX \gtrsim \jb{n}$.
Thus, if $|n_1|, |n_2| \gtrsim |n|$, then, we have
$M_{s_1,s_1, s_1, \theta} \lesssim \jb{n}^{-s_1 + 2\theta } \lesssim 1$
for $s_1 \geq 2 \theta$.
If $|n_1| \ll |n|$, then we have $|n_2| \sim |n|$ and $\MAX \gtrsim \jb{n}^2$.
Also, if $|n_2| \ll |n|$, then we have $|n_1| \sim |n|$ and $\MAX \gtrsim \jb{n}^2$.
(See \eqref{RN1}.)
In both cases, we have $M_{s_1,s_1, s_1, \theta} \lesssim \jb{n}^{-\frac{1}{2} + 4\theta } \lesssim 1$.
Hence, by repeating the first half of Part 1 in the proof of Proposition \ref{PROP:Gibbsbilinear1},
we have 
$ \big|\mathcal{I}_{s_1, s_1, s_1, \theta}\big|
\lesssim \|f\|_{L^2_{n, \tau}}\|g\|_{L^2_{n, \tau}}\|h\|_{L^2_{n, \tau}},$
for all $h \in L^2_{n, \tau}$.

On $B^c$, we do not expect any contribution from $\wt{\s}, \wt{\s}_1, \wt{\s}_2$.
However, we have $|n|\sim|n_1|\sim|n_2|$.
Thus, $\frac{\jb{n}^{s_1+\frac{1}{2}}}{\jb{n_1}^{s_1} \jb{n_2}^{s_2}} \lesssim \jb{n}^{\frac{1}{2} -s_2} \lesssim 1$
for $s_2 \geq \frac{1}{2}$.
Also, note that for fixed $n_1$, there are only finitely many values of $n$ on $B^c$.
Hence, by repeating the second half of Part 1 in the proof of Proposition \ref{PROP:Gibbsbilinear1},
we have 
$ \big|\mathcal{I}_{s_1, s_1, s_2, \theta}\big|
\lesssim \|f\|_{L^2_{n, \tau}}\|g\|_{L^\infty_n L^2_\tau}\|h\|_{L^2_{n, \tau}}$,
for all $h \in L^2_{n, \tau}$.
This proves the $X_\g^{s_1 -\frac{1}{2},\, b'}$ part of \eqref{Gibbsbilinear2}.

Next, consider the $X_\g^{s_2 -\frac{1}{2}, \, \infty, \, b'}$ part of \eqref{Gibbsbilinear2}.
On $B$, we have $\MAX \gtrsim \jb{n}$.
If $|n_1|, |n_2| \gtrsim |n|$, then we have
$M_{s_2,s_1, s_1, \theta} \lesssim \jb{n}^{s_2-2s_1 + 2\theta } \lesssim 1$
for $2 s_1 \geq s_2 + 2 \theta$.
If $|n_1| \ll |n|$ or  $|n_2| \ll |n|$, then we have $\MAX \sim \jb{n}^2$
and 
$M_{s_2,s_1, s_1, \theta} \lesssim \jb{n}^{s_2 - s_1 - \frac{1}{2} + 4\theta } \lesssim 1$
for $s_1 + \frac{1}{2} \geq s_2 + 4\theta$.
Hence, by repeating the first half of Part 2 in the proof of Proposition \ref{PROP:Gibbsbilinear1},
we have 
$ \big|\mathcal{I}_{s_2, s_1, s_1, \theta}\big|
\lesssim \|f\|_{L^2_{n, \tau}}\|g\|_{ L^2_{n, \tau}}\|h\|_{L^1_n L^2_{\tau}},$
for all $h \in L^1_n L^2_\tau$.

On $B^c$, we have $|n|\sim|n_1|\sim|n_2|$.
Thus, $\frac{\jb{n}^{s_2+\frac{1}{2}}}{\jb{n_1}^{s_2} \jb{n_2}^{s_2}} \lesssim \jb{n}^{\frac{1}{2} -s_2} \lesssim 1$
for $s_2 \geq \frac{1}{2}$.
Hence, by repeating the second half of Part 2 in the proof of Proposition \ref{PROP:Gibbsbilinear1},
we have 
$ \big|\mathcal{I}_{s_2, s_2, s_2, \theta}\big|
\lesssim \|f\|_{L^\infty_n L^2_\tau}\|g\|_{L^\infty_n L^2_\tau} \|h\|_{L^1_n L^2_{\tau}},$
for all $h \in L^1_n L^2\tau$.
This proves the $X_\g^{s_2 -\frac{1}{2},\, \infty, \, b'}$ part of \eqref{Gibbsbilinear2}.
\end{proof}

\section{Construction of the Gibbs Measure}

In this section, we discuss the construction of the Gibbs measure $\mu$ for \eqref{SBO}
following Bourgain \cite{BO4}.
Once we construct the Gibbs measure, we can easily adapt the argument in Bourgain \cite{BO4, BOPARK}
to extend the local well-posedness result (Theorem \ref{THM:LWP})
to the global well-posedness almost surely on the statistical ensemble
and to establish the invariance of the Gibbs measure (Theorem \ref{THM:INVARIANCE}.)
The argument is standard and we omit the details.
Also, see Oh \cite{OH3} for the details on this part of the argument for the KdV systems.

 Given a Hamiltonian flow
\begin{equation*}
\begin{cases}
\dot{p}_i = \frac{\partial H}{\partial q_i} \\
\dot{q}_i = - \frac{\partial H}{\partial p_i} 
\end{cases}
\end{equation*}
on $\mathbb{R}^{2n}$ with Hamiltonian $ H = H(p_1, \cdots, p_n, q_1, \cdots, q_n)$,
Liouville's theorem states that the Lebesgue measure on $\mathbb{R}^{2n}$ is invariant under the flow.
From the conservation of the Hamiltonian $H$, the Gibbs measures
$e^{-\nu H} \prod_{i = 1}^{n} dp_i dq_i$ are also invariant,
where $\nu> 0$ is the reciprocal temperature.

In the context of NLS, Lebowitz-Rose-Speer \cite{LRS} considered the Gibbs measure of the form 
$d \mu = \exp (-\beta H(u)) \prod_{x\in \T} d u(x)$ where 
$H(u)$ is the Hamiltonian given by $H(u) = \frac{1}{2} \int |u_x|^2 \pm \frac{1}{p} \int |u|^p dx$.
In the focusing case (with  $-$), $H(u)$ is not bounded from below and this causes a problem.
Using the conservation of the $L^2$ norm,
they instead considered the Gibbs measure of the form 
$d \mu = \exp (-\beta H(u)) \chi_{\{\|u\|_{L^2} \leq B \}} \prod_{x\in \T} d u(x)$, i.e. with an $L^2$-cutoff.
This turned out to be a well-defined measure on $H^{\frac{1}{2}-}(\T) = \bigcap_{s<\frac{1}{2}} H^s(\T)$
(for $p < 6$ with any $B>0$, and $ p = 6$ with sufficiently small $B$.)
Bourgain \cite{BO4} continued this study and 
proved the invariance of $\mu$ under the flow of NLS
and the global well-posedness almost surely on the statistical ensemble.
Note that  \cite{BO4} appeared before the so-called Bourgain's method \cite{BO2} or the $I$-method \cite{CKSTT4},
i.e. there was virtually no method available to establish any GWP result 
from a LWP result whose regularity was between two conservation laws.
This was the case for NLS for $  4< p \leq 6$.
We use this idea to obtain a.s. GWP of the SBO system \eqref{SBO}.
The same idea was applied to show the invariance of the Gibbs measures
and a.s GWP for coupled KdV systems under some Diophantine conditions \cite{OH3}. 
Recently, Burq-Tzvetkov \cite{BT1} used 
similar ideas to prove a.s. GWP for 
the nonlinear wave equation on the unit ball in $\mathbb{R}^3$ under the radial symmetry.
Also, see other work by Tzvetkov related to this subject \cite{TZ1}, \cite{TZ2}.

Recall that the mean of $v$, the $L^2$ norm of $u$, 
and the Hamiltonian 
$H(u, v)  = \frac{1}{2}\int |u_x|^2 dx + \frac{\kk}{2}\int(D^\frac{1}{2}v)^2dx  + \frac{\al}{2} \int v |u|^2 dx$
are conserved under the flow of the SBO system \eqref{SBO}. 
In the following, we assume $\kk := - \frac{\al \g}{2\beta} > 0$
so that the quadratic part of the Hamiltonian \eqref{CONSERVED2} is nonnegative.
Note that $\frac{\al}{2} \int v|u|^2 dx$ is not sign-definite.
In particular,  $\exp \big( - \nu \frac{\al}{2} \int v|u|^2 dx\big)$
is not bounded from above as in the case of focusing NLS 
and KdV \cite{LRS}, \cite{BO4}.
This motivates us to define the Gibbs measure of the form
$d \mu = \chi_{\{ \|\phi\|_{L^2} \leq B\}} \chi_{\{|\ft{\psi}(0)| \leq B\}}
e^{ - \nu \frac{\al}{2} \int \psi|\phi|^2 dx}d\rho$,
where $d\rho $ is the Gaussian introduced in \eqref{GAUSS}.
Here, we associate $(\phi, \psi)$ with $(u(t), v(t))$ for fixed $t\in \R$. 
In particular, $\psi$ is real-valued.
For simplicity, we  set $\nu = 1$ for the rest of the paper.

Let $(a, b) = (a_n, b_n)_{n\in \mathbb{Z}}$ denote  the Fourier coefficients of $(\phi, \psi)$ on $\mathbb{T}$.
Since $\psi$ is real-valued, we have $b_{-n} = \cj{b_n}$. 
Let $B>0$ be a cutoff as above and consider the cylinder in $\mathbb{C}^{2N+1} \times \mathbb{R}^{2N+1}$ given by
\[ \Omega_{N, B} = \left\{  (a_n, b_n)_{ |n| \leq N } : \| a_n \|_{L^2_n}  \leq B \text{ and } |b_0|\leq B\right\} .\]

\noindent
Here, we abuse the notation and actually identify $\{b_n\}_{|n| \leq N}$ with
$(b_0, \text{Re}\,  b_1, \cdots, $ $\text{Re} \,b_N,\text{Im} \, b_1, \cdots, $ $\text{Im} \, b_N) \cong \mathbb{R}^{2N+1}$.
Also, define 
\[  \Omega_B = \left\{  (a_n, b_n)_{ n \in \mathbb{Z }} : \| a_n \|_{L^2_n}  \leq B \text{ and } |b_0|\leq B\right\}. \]

\noindent
Then, define the measure $\rho_N$ on $\mathbb{C}^{2N} \times \mathbb{R}^{2N} 
= \Big\{ (a_n, b_n)_{ \begin{smallmatrix}|n| \leq N \\ n \ne 0 \end{smallmatrix}} \Big\}$
with the normalized density
\[ d \rho_N = \wt{Z}_N^{-1} e^{-\frac{1}{2} \sum_{ |n| \leq N,  n \ne 0 } ( n^2 |a_n|^2+ \kk |n| |b_n|^2)} \prod_{ |n| \leq N,  n \ne 0 } d ( a_n \otimes b_n), \]

\noi
where
$\wt{Z}_N = \int_{\mathbb{C}^{2N} \times \mathbb{R}^{2N}} 
e^{-\frac{1}{2} \sum_{ |n| \leq N,  n \ne 0 } 
( n^2 |a_n|^2+ \kk |n| |b_n|^2)} \prod_{ |n| \leq N,  n \ne 0 } d ( a_n \otimes b_n). $
Note that this measure is the induced probability measure on $\mathbb{C}^{2N} \times \mathbb{R}^{2N}$ under the map
$ \omega \longmapsto \big\{ \big(n^{-1} f_n(\omega),  
\kk^{-\frac{1}{2}}|n|^{-\frac{1}{2}}g_n(\omega) \big); |n| \leq N, n\ne 0 \big\}$,
where $\{f_n(\omega)\}$ and $\{g_n(\omega)\}$ are i.i.d. standard complex  Gaussian random variables
(with $g_{-n} = \cj{g_n}$.)
Now, define the Gaussian measure $\rho$ on $\big\{ (a_n, b_n)_{  n \ne 0 } \big\}$
whose density is given by
\begin{equation} \label{gaussian}
 d \rho = \wt{Z}^{-1} 
e^{-\frac{1}{2} \sum_{  n \ne 0 } ( n^2 |a_n|^2+ \kk |n| |b_n|^2)} \prod_{  n \ne 0 } d ( a_n \otimes b_n), 
\end{equation}

\noi
where
$\wt{Z} = \int
e^{-\frac{1}{2} \sum_{   n \ne 0 } ( n^2 |a_n|^2+ \kk |n| |b_n|^2)} \prod_{  n \ne 0 } d ( a_n \otimes b_n). $

Recall the following definitions \cite{KUO}:
Given  a real separable Hilbert space $H$ with norm $\|\cdot \|$, 
let $\mathcal{F} $ denote the set of finite dimensional orthogonal projections $\mathbb{P}$ of $H$.
Then, define a cylinder set $E$ by  $E = \{ u \in H: \mathbb{P}u \in F\}$ where $\mathbb{P} \in \mathcal{F}$ 
and $F$ is a Borel subset of $\mathbb{P}H$,
and let $\mathcal{R} $ denote the collection of such cylinder sets.
Note that $\mathcal{R}$ is a field but not a $\s$-field.
Then, the Gauss measure $\rho$ on $H$ is defined 
by 
\[ \rho(E) = (2\pi)^{-\frac{n}{2}} \int_F e^{-\frac{\|u\|^2}{2}} du  \]

\noindent
for $E \in \mathcal{R}$, where
$n = \text{dim} \mathbb{P} H$ and  
$du$ is the Lebesgue measure on $\mathbb{P}H$.
It is known that $\rho$ is finitely additive but not countably additive in $\mathcal{R}$.

A seminorm $|||\cdot|||$ in $H$ is called measurable if for every $\eps>0$, 
there exists $\mathbb{P}_0 \in \mathcal{F}$ such that 
$ \rho( ||| \mathbb{P} u ||| > \eps  )< \eps$
for $\mathbb{P} \in \mathcal{F}$ orthogonal to $\mathbb{P}_0$.
Any measurable seminorm  is weaker  than the norm of $H$,
and $H$ is not complete with respect to $|||\cdot|||$ unless $H$ is finite dimensional.
Let $B$ be the completion of $H$ with respect to $|||\cdot|||$
and denote by $i$ the inclusion map of $H$ into $B$.
The triple $(i, H, B)$ is called an abstract Wiener space.

Now, regarding $v \in B^\ast$ as an element of $H^\ast \equiv H$ by restriction,
we embed $B^\ast $ in $H$.
Define, for a Borel set $F \subset \R^n$, 
\[ \wt{\rho} ( \{u \in B: ((u, v_1), \cdots, (u, v_n) )\in F\})
= \rho ( \{u \in H: (\jb{u, v_1}_H, \cdots, \jb{u, v_n}_H )\in F\}),\]

\noindent
where $v_j$'s are in $B^\ast$ and $(\cdot , \cdot )$ denote the natural pairing between $B$ and $B^\ast$.
Let $\mathcal{R}_B$ denote the collection of cylinder sets
$ \{u \in B: ((u, v_1), \cdots, (u, v_n) )\in F \}$
in $B$.

\begin{theorem}[Gross \cite{GROSS}]
$\wt{\rho}$ is countably additive in the $\s$-field generated by $\mathcal{R}_B$.
\end{theorem}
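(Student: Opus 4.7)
The plan is to apply Carath\'eodory's extension theorem. Since $\mathcal{R}_B$ is a field and $\wt\rho$ inherits finite additivity from $\rho$ on $\mathcal{R}$ (under the natural restriction $B^* \embeds H^* \cong H$, each cylinder set in $\mathcal{R}_B$ corresponds to a cylinder set in $\mathcal{R}$ with the same prescribed measure), countable additivity on $\sigma(\mathcal{R}_B)$ will follow from countable additivity on the field $\mathcal{R}_B$. This in turn is equivalent to continuity from above at $\emptyset$: for every nested sequence $E_n \supset E_{n+1}$ in $\mathcal{R}_B$ with $\bigcap_n E_n = \emptyset$, one must show $\wt\rho(E_n) \to 0$.

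I would argue by contradiction: assume $\wt\rho(E_n) \geq \delta > 0$ for all $n$. Applying Gram--Schmidt to the finitely many $v_j \in B^*$ defining each $E_n$ (viewed as elements of $H$) and passing to a cumulative orthonormal system, I may assume every $E_n$ is defined via a single orthonormal sequence $\{v_j\}_{j \geq 1} \subset B^*$. Writing $\mathbb{P}_N$ for the orthogonal projection of $H$ onto $\mathrm{span}\{v_1, \dots, v_N\}$, each $E_n$ is the preimage of some Borel set $F_n \subset \R^{k_n}$, and $\wt\rho(E_n)$ coincides with the standard Gaussian measure of $F_n$ in $\R^{k_n}$. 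Since finite-dimensional Gaussians are Radon, I can shrink each $E_n$ from within to a cylinder $K_n$ over a compact base, sacrificing only $\delta \cdot 2^{-(n+2)}$ of measure, so that $\wt\rho\bigl(\bigcap_{m \leq n} K_m\bigr) \geq \delta/2$ and in particular each finite intersection is nonempty.

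The main obstacle, and the sole place where the measurability hypothesis on $|||\cdot|||$ enters, is upgrading the finite-dimensional compactness of the bases to honest $(B, |||\cdot|||)$-compactness of their preimages. The hypothesis gives, for each $\eps > 0$, a projection $\mathbb{P}_0 \in \mathcal{F}$ such that $\rho(|||\mathbb{P}u||| > \eps) < \eps$ for every $\mathbb{P} \perp \mathbb{P}_0$. Applying this with $\eps_j = \delta \cdot 2^{-j-3}$ and running a diagonal argument over $n$, one can further trim each $K_n$ to a closed subset $\wt K_n$ whose image in $B$ is contained in a single $|||\cdot|||$-precompact set, while losing at most $\delta/4$ of measure in total across the sequence. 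The sets $\{\wt K_n\}$ then form a decreasing sequence of nonempty closed subsets of $B$ contained in a common $(B, |||\cdot|||)$-compact set, so by the finite intersection property $\bigcap_n \wt K_n \neq \emptyset$, contradicting $\bigcap_n E_n = \emptyset$. The delicate step is executing the diagonal trimming uniformly in $n$; this is Gross's original contribution and is carried out in full detail in Kuo \cite{KUO}, whose argument I would follow after performing the reductions above.
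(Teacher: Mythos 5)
The paper does not prove this statement: it is imported verbatim from Gross \cite{GROSS}, with Kuo \cite{KUO} as the textbook reference, so there is no in-paper argument to compare yours against. Your outline is a faithful sketch of the standard proof from exactly those sources: Carath\'eodory reduces the claim to continuity at $\emptyset$ on the field $\mathcal{R}_B$; Gram--Schmidt and inner regularity of finite-dimensional Gaussians replace the nested cylinders by cylinders over compact bases; and the measurability of $|||\cdot|||$ is used to trim these to a decreasing sequence of nonempty closed subsets of a single $|||\cdot|||$-compact set, whose nonempty intersection gives the contradiction. The only reservation is that, as you acknowledge, the uniform diagonal trimming is precisely the mathematical content of Gross's theorem and you defer it to Kuo rather than execute it, so what you have is an accurate road map rather than a self-contained proof; given that the paper itself simply cites the result, this is entirely consistent with its treatment.
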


\noi
In the present context, let $H = \dot{H}^1 \times \dot{H}^\frac{1}{2}$ and  $B = \textup{H}^{s_1, s_2}$ 
with $0 < s_1 < \frac{1}{2} < s_2 < 1$. 
Then, we have 

\begin{proposition} \label{PROP:meas}
The seminorm  $\|\cdot\|_{B}$ is measurable.

\end{proposition}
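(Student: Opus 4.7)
The plan is to verify Gross's measurability criterion for $\|\cdot\|_B$ by reducing to a Fourier tail estimate for the Gaussian $\rho$. Set $\mathbb{P}_N$ to be the orthogonal projection of $H = \dot{H}^1 \times \dot{H}^{1/2}$ onto the finite-dimensional subspace spanned (componentwise) by the Fourier modes $\{e^{inx} : 0 < |n| \leq N\}$. It suffices to prove
\begin{equation*}
\rho\bigl(\|(I - \mathbb{P}_N)(\phi, \psi)\|_B > \eps\bigr) \longrightarrow 0 \quad \text{as } N \to \infty
\end{equation*}
for every $\eps > 0$. The passage from this statement to the Gross condition for an arbitrary finite-rank $\mathbb{P} \in \mathcal{F}$ orthogonal to $\mathbb{P}_0 := \mathbb{P}_N$ is the standard reduction in abstract Wiener space theory, exploiting the unitary invariance of the Gaussian on the orthogonal complement of $\mathbb{P}_N H$ (cf.\ Kuo \cite{KUO}).

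Next, parametrize $\rho$ as in \eqref{gaussian}:
\begin{equation*}
\phi(x) = \sum_{n \neq 0} \frac{f_n(\omega)}{n}\, e^{inx}, \qquad \psi(x) = \sum_{n \neq 0} \frac{g_n(\omega)}{\sqrt{\kk |n|}}\, e^{inx},
\end{equation*}
with $\{f_n\}, \{g_n\}$ independent standard complex Gaussians subject to $g_{-n} = \cj{g_n}$. The $B$-norm then decomposes into four pieces: the two Sobolev pieces $\|\phi\|_{H^{s_1}}$, $\|\psi\|_{H^{s_1 - 1/2}}$ and the two weighted $\ell^\infty$ pieces $\sup_n \jb{n}^{s_2}|\ft{\phi}(n)|$, $\sup_n \jb{n}^{s_2 - 1/2}|\ft{\psi}(n)|$. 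I would estimate each of the four restricted to $|n| > N$ and combine via a union bound.

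For the two Sobolev tails, a second-moment Chebyshev bound suffices. For instance,
\begin{equation*}
\mathbb{E}\,\|(I - \mathbb{P}_N)\phi\|_{H^{s_1}}^2 = \sum_{|n| > N} \frac{\jb{n}^{2s_1}}{n^2}\,\mathbb{E}|f_n|^2 \lesssim \sum_{|n| > N} \jb{n}^{2s_1 - 2} \longrightarrow 0,
\end{equation*}
where summability uses precisely $s_1 < \tfrac{1}{2}$; the computation for $(I-\mathbb{P}_N)\psi$ in $H^{s_1 - 1/2}$ is entirely analogous. For the two $\ell^\infty$ tails, a union bound with the complex Gaussian tail $\rho(|f_n| > t) \lesssim e^{-ct^2}$ gives
\begin{equation*}
\rho\Big(\sup_{|n|>N}\jb{n}^{s_2 - 1}|f_n| > \eps\Big) \leq \sum_{|n|>N} \rho\bigl(|f_n| > \eps\,\jb{n}^{1 - s_2}\bigr) \lesssim \sum_{|n|>N} e^{-c\eps^2\,\jb{n}^{2(1-s_2)}},
\end{equation*}
which tends to $0$ as $N \to \infty$ since $s_2 < 1$ gives a positive exponent; the same estimate with exponent $2 - 2s_2 > 0$ handles the $\ft{\psi}$ tail. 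Combining the four tail bounds yields the desired convergence in probability.

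The main obstacle is really the abstract first step: translating the Gross criterion, which is quantified over all finite-rank $\mathbb{P}$ orthogonal to $\mathbb{P}_0$, into the concrete tail estimate for the single projection $I - \mathbb{P}_N$. Once this standard reduction is in place, the remaining work is routine tail summation, and the two hypotheses $s_1 < \tfrac{1}{2}$ and $s_2 < 1$ appear as the sharp thresholds that make the $L^2$-type and $L^\infty$-type tail estimates, respectively, summable.
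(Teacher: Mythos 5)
The paper does not prove this proposition in-text but defers to Oh \cite{OH3}; your argument is precisely the standard one used there and in Bourgain \cite{BO4}: Chebyshev on the second moment for the Sobolev tails (where $s_1<\tfrac12$ is exactly what makes $\sum\jb{n}^{2s_1-2}$ summable), a union bound with Gaussian tails for the weighted $\ell^\infty$ tails (where $s_2<1$ gives the positive exponent), and the standard reduction from the single Fourier tail projection $I-\mathbb{P}_N$ to arbitrary finite-rank $\mathbb{P}\perp\mathbb{P}_N$. The only cosmetic remark is that this last reduction rests on the L\'evy-type inequality $\rho(\|\mathbb{P}u\|_B>\eps)\le 2\,\rho(\|(I-\mathbb{P}_N)u\|_B>\eps)$ for independent symmetric Gaussian summands rather than on unitary invariance per se, but this is exactly the standard lemma in Kuo \cite{KUO} that you invoke, so the proof is correct as written.
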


\noindent
Hence, $(i, \dot{H}^1 \times \dot{H}^\frac{1}{2}, \text{H}^{s_1, s_2})$ is an abstract Wiener space, 
and $\rho$ in \eqref{GAUSS} and \eqref{gaussian} is countably additive in 
$\text{H}^{s_1, s_2}$ for
$0 < s_1  < \frac{1}{2} < s_2  < 1$.
Moreover, we have $\rho_N \rightharpoonup \rho$.  
(See \cite{Z}.)
For the proof of this proposition, see Oh \cite{OH3}.
Note that Bourgain used this norm 
(i.e.  $H = \dot{H}^1 $,  $B = H^{s_1, s_2}$ 
with $0 < s_1 < \frac{1}{2} < s_2 < 1$)
in studying the invariance of the Gibbs measure for mKdV \cite{BO4}.

Given an abstract Wiener space $(i, H, B)$, we have the following result due to Fernique \cite{FER}.
Also, see \cite[Theorem 3.1]{KUO}.

\begin{lemma} 
\label{LEM:FER}
There exists $ c > 0$ such that $ \int_B e^{c \|u\|_B^2} \rho(du) < \infty$.
Hence, there exists $ c' > 0$ such that $\rho ( \|u\|_B > K) \leq e^{-c'K^2}$.
\end{lemma}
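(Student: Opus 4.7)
This is the classical exponential integrability theorem of Fernique for Gaussian measures on a Banach space, and the plan is to adapt his original symmetrization argument. The starting point is that since $\rho = \wt{\rho}$ is a centered Gaussian measure on $B = \textup{H}^{s_1, s_2}$, if $X_1, X_2$ are independent $B$-valued random variables with distribution $\rho$, then the rotated pair
\[ Y_1 = (X_1 - X_2)/\sqrt{2}, \qquad Y_2 = (X_1 + X_2)/\sqrt{2} \]
is again a pair of independent random variables each with distribution $\rho$. (This follows because for any finitely many $v_1,\dots,v_k\in B^*$, the vector $\bigl((X_1,v_j),(X_2,v_j)\bigr)_{j=1}^k$ is jointly centered Gaussian by construction through cylinder sets, and the orthogonal rotation preserves the covariance; extending from cylinder sets to $B$ by Gross's theorem above gives the invariance in distribution.)

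Next I would record the elementary deterministic inequality coming from writing $X_1 = (Y_1+Y_2)/\sqrt{2}$, $X_2 = (Y_2-Y_1)/\sqrt{2}$ and applying the reverse triangle inequality in both,
\[ \min\bigl(\|X_1\|_B,\|X_2\|_B\bigr) \geq \frac{\|Y_2\|_B - \|Y_1\|_B}{\sqrt{2}}. \]
Thus for any $0\leq s\leq t$, the event $\{\|Y_1\|_B\leq s\}\cap\{\|Y_2\|_B > t\}$ is contained in $\{\|X_1\|_B>(t-s)/\sqrt{2}\}\cap\{\|X_2\|_B>(t-s)/\sqrt{2}\}$, and by independence on both sides,
\[ \rho(\|u\|_B\leq s)\,\rho(\|u\|_B>t) \leq \rho\bigl(\|u\|_B > (t-s)/\sqrt{2}\bigr)^2. \]

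Now I would pick $t_0>0$ large enough that $\alpha:=\rho(\|u\|_B\leq t_0)>\tfrac12$ (such $t_0$ exists by tightness of any Borel probability measure on a separable Banach space) and strictly larger than $\rho(\|u\|_B>t_0)$, then define recursively $t_{n+1}=t_0+\sqrt{2}\,t_n$, so that $t_n\sim c\,2^{n/2}t_0$. Applying the inequality with $s=t_0$, $t=t_{n+1}$ yields, after setting $q_n=\rho(\|u\|_B>t_n)/\alpha$, the super-exponential recursion $q_{n+1}\leq q_n^2$, hence $q_n\leq q_0^{2^n}$. Since $q_0<1$ and $t_n^2\asymp 2^n t_0^2$, this gives the Gaussian tail estimate $\rho(\|u\|_B>K)\leq C\,e^{-c'K^2}$ for all $K\geq 0$, establishing the second assertion.

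The first assertion then follows by integrating the tail: for any $0<c<c'$,
\[ \int_B e^{c\|u\|_B^2}\rho(du) = 1 + \int_0^\infty 2cK\,e^{cK^2}\rho(\|u\|_B>K)\,dK < \infty. \]
The main (essentially only) obstacle is justifying the symmetrization step in the infinite-dimensional Banach setting, but this is precisely where the abstract Wiener space framework recalled above is used: the $\sigma$-field on $B$ is generated by the cylinder sets $\mathcal{R}_B$, and Gaussian rotational invariance holds on each such cylinder. Every subsequent step is a finite-dimensional computation that transfers to $B$ by $\sigma$-additivity.
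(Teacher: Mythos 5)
Your proposal is correct: it is a faithful reconstruction of Fernique's classical symmetrization argument (the rotation invariance of $\rho\otimes\rho$ under $(x_1,x_2)\mapsto((x_1-x_2)/\sqrt{2},(x_1+x_2)/\sqrt{2})$, the resulting inequality $\rho(\|u\|_B\le s)\,\rho(\|u\|_B>t)\le\rho(\|u\|_B>(t-s)/\sqrt{2})^2$, the doubling iteration, and integration of the Gaussian tail). The paper does not reprove this lemma but cites exactly this result and this proof (Fernique; Kuo, Theorem 3.1), so your argument coincides with the intended one.
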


Now, define $\Omega_{N, B} (s_1, s_2, K)$ and $\Omega_{B} (s_1, s_2, K)$ by
\begin{align*}
\Omega_{N, B} (s_1, s_2, K) & = \Big\{  (a_n, b_n)_{ |n| \leq N }  \in \Omega_{N, B}: 
\Big\| \sum_{|n| \leq N} (a_n, b_n) e^{i n x} \Big\|_{\text{H}^{s_1, s_2} }  \leq K \Big\}  \\
 \Omega_{ B} (s_1, s_2, K) & = \Big\{  (a_n, b_n)_{ n \in \mathbb{Z} }  \in \Omega_{ B}: 
\Big\| \sum_{ n \in \mathbb{Z} } (a_n, b_n) e^{i n x} \Big\|_{\text{H}^{s_1, s_2}}  \leq K \Big\}.  
\end{align*}

\noindent
Also, let $\wt{\Omega}_{N, B}$, $\wt{\Omega}_{B}$, $\wt{\Omega}_{N, B} (s_1, s_2, K)$,
  $\wt{\Omega}_{B} (s_1, s_2, K) $
be the restrictions of ${\Omega}_{N, B}$, ${\Omega}_{B}$, ${\Omega}_{N, B} (s_1, s_2, K)$,
  $\Omega_{B} (s_1, s_2, K) $ onto their mean 0 parts.
Then, basically from Lemma \ref{LEM:FER}, we have 
\begin{lemma} \label{tight0}
 Let $0 < s_1  < \frac{1}{2} < s_2  < 1$
For sufficiently large $K > 0$, we have
\[ \rho_N \big(\wt{\Omega}_{N, B} \setminus \wt{\Omega}_{N, B} (s_1, s_2, K) \big) \leq e^{-cK^2}, 
\text{ and } \ 
 \rho \big(\wt{\Omega}_{ B} \setminus  \wt{\Omega}_{ B} (s_1, s_2, K) \big) \leq e^{-cK^2},\]
where $c$ and the implicit constant are independent of $N$.
\end{lemma}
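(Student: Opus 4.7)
The plan is to read both estimates as Gaussian tail bounds for the $\text{H}^{s_1,s_2}$ norm and to obtain them directly from Fernique's inequality (Lemma \ref{LEM:FER}), which applies because Proposition \ref{PROP:meas} has already established that $(i,\dot{H}^1\times\dot{H}^{1/2},\text{H}^{s_1,s_2})$ is an abstract Wiener space carrying $\rho$. The $L^2$ and mean cutoffs built into the definitions of $\widetilde{\Omega}_{N,B}$ and $\widetilde{\Omega}_B$ only shrink the events of interest, so they can be dropped for free.

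The infinite-dimensional bound is then essentially one line. Fernique (Lemma \ref{LEM:FER}) gives a constant $c>0$ such that $\rho(\|u\|_{\text{H}^{s_1,s_2}}>K)\le e^{-cK^2}$ for $K$ sufficiently large. Since
\[
\widetilde{\Omega}_B\setminus\widetilde{\Omega}_B(s_1,s_2,K)\subset\big\{\|u\|_{\text{H}^{s_1,s_2}}>K\big\},
\]
the second estimate follows immediately.

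For the truncated case, I would reduce to the infinite-dimensional case via two elementary observations. First, Fourier truncation is contractive on $\text{H}^{s_1,s_2}$: clearly $\|P_N u\|_{H^{s_1}}\le\|u\|_{H^{s_1}}$ and $\sup_{|n|\le N}\jb{n}^{s_2}|\ft{u}(n)|\le\sup_{n}\jb{n}^{s_2}|\ft{u}(n)|$, whence $\|P_N u\|_{\text{H}^{s_1,s_2}}\le\|u\|_{\text{H}^{s_1,s_2}}$. Second, because the density \eqref{gaussian} is a product of independent Gaussians in the coordinates $(a_n,b_n)_{n\ne 0}$, the measure $\rho_N$ is literally the pushforward of $\rho$ under the coordinate projection onto $\{0<|n|\le N\}$. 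Combining these two facts,
\[
\rho_N\big(\|u_N\|_{\text{H}^{s_1,s_2}}>K\big)=\rho\big(\|P_N u\|_{\text{H}^{s_1,s_2}}>K\big)\le\rho\big(\|u\|_{\text{H}^{s_1,s_2}}>K\big)\le e^{-cK^2},
\]
with $c$ independent of $N$, and the first estimate follows after intersecting with $\widetilde{\Omega}_{N,B}$.

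I anticipate no real obstacle here, since Proposition \ref{PROP:meas} and Lemma \ref{LEM:FER} shoulder the analytic weight. The one point that merits any care is the verification that the coordinate projection $(\pi_N)_*\rho=\rho_N$, which is a direct consequence of the product structure in \eqref{gaussian}, and the monotonicity of the $\sup_n\jb{n}^{s_2}|\ft{u}(n)|$ seminorm under $P_N$, which is trivial. Everything else is a routine monotonicity-plus-pushforward argument used to transfer the continuum tail bound to every finite level $N$ with the same constant.
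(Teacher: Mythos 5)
Your proposal is correct and follows exactly the route the paper intends: the paper omits the proof but explicitly attributes the lemma to Fernique's bound (Lemma \ref{LEM:FER}) on the abstract Wiener space $(i,\dot{H}^1\times\dot{H}^{1/2},\text{H}^{s_1,s_2})$, and your reduction of the finite-dimensional case via the contractivity of $\mathbb{P}_N$ on the $\text{H}^{s_1,s_2}$ norm together with the identity $(\pi_N)_*\rho=\rho_N$ (from the product Gaussian structure) is the standard way to get the constant uniform in $N$. No gaps.
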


\noindent
The proof of this lemma is standard and is omitted.
See \cite{BO4, OH3}.

For the rest of the paper, let $0 < s_1 = \frac{1}{2} - < \frac{1}{2} < s_2 = 1- < 1$ with $s_2 < 2s_1$.
Let $\mathbb{P}_N$ be the projection onto the frequencies $|n| \leq N $ given by
$ \mathbb{P}_N \phi =\phi^N =  \sum_{|n| \leq N} a_n e^{ i nx}. $
Now, define the weighted Wiener measure $\mu_N$ on $\mathbb{R}^{2N+ 1} \times \mathbb{C}^{2N+ 1} 
= \big\{ (a_n, b_n)_{|n| \leq N } \big\}$ 
 by
\[ d \mu_N = Z_N^{-1} \exp \Big( -\frac{\al}{2} \int \mathbb{P}_N \psi \, |\mathbb{P}_N \phi |^2dx 
\Big) \chi_{\Omega_{N, B}} \ d (a_0, b_0) \otimes d \rho_N,  \]

\noindent
where
$ Z_N = \int_{\mathbb{C}^{2N+ 1} \times \mathbb{R}^{2N+ 1} }
\exp \left( -\frac{\al}{2} \int \mathbb{P}_N \psi \, |\mathbb{P}_N \phi |^2dx \right) \chi_{\Omega_{N, B}} \ d (a_0, b_0) \otimes d \rho_N. $
Also, define  the weighted Wiener measure $\mu$ on $\big\{ (a_n, b_n)_{  n \in \mathbb{Z} } \big\}$ by
\[ d \mu = Z^{-1} \exp \Big( -\frac{\al}{2} \int \psi  \, |\phi|^2dx \Big) \chi_{\Omega_B} 
\ d (a_0, b_0) \otimes d \rho,\]

\noindent
where
$Z = \int \exp \left( -\frac{\al}{2} \int \psi  \, |\phi|^2dx \right) \chi_{\Omega_B} \ d (a_0, b_0) \otimes d \rho. $
At this point, $Z$ need not be finite and thus $d \mu$ need not be a well-defined probability measure. 
Indeed, we have

\begin{lemma} \label{LEM:GABSCONTI}
For any $ r < \infty$, we have
\begin{align} \label{GABSCONTI1}
 \exp \left( -\frac{\al}{2} \int \mathbb{P}_N \psi \, |\mathbb{P}_N \phi |^2dx \right) \chi_{\Omega_{N, B}}
& \in L^r ( d (a_0, b_0) \otimes d \rho_N ) 
\\ \label{GABSCONTI2}
\exp \left( -\frac{\al}{2} \int \psi  \, |\phi|^2dx \right) \chi_{\Omega_B} & \in L^r (d (a_0, b_0) \otimes d \rho).
\end{align}
\end{lemma}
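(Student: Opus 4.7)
The plan is to adapt Bourgain's argument for the focusing NLS Gibbs measure \cite{BO4} to the present cubic interaction. I would first establish \eqref{GABSCONTI1} with bounds uniform in $N$, and then derive \eqref{GABSCONTI2} by passing to the limit using Lemma \ref{tight0}, the weak convergence $\rho_N \rightharpoonup \rho$, and Fatou's lemma together with almost-sure convergence of the truncated interaction $\int \mathbb{P}_N\psi\,|\mathbb{P}_N\phi|^2\, dx$ under $\rho$ (the latter verified by a direct Wiener chaos calculation showing it is Cauchy in $L^r(d\rho)$).

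For \eqref{GABSCONTI1}, setting $F_N = \int \mathbb{P}_N\psi\,|\mathbb{P}_N\phi|^2\, dx$, by the layer-cake formula it suffices to prove a tail estimate of the form $\rho_N(\{|F_N|>\lambda\}\cap \Omega_{N,B}) \le C e^{-c\lambda}$ for $\lambda\ge 1$, with constants independent of $N$. I would first decompose $\psi^N = \ft{\psi}^N(0) + \wt{\psi}^N$; the mean contribution $\ft{\psi}^N(0)\|\phi^N\|_{L^2}^2$ is bounded pointwise by $B^3$ on $\Omega_{N,B}$ and can be absorbed. For the mean-zero piece I would dualize via
\[
\Big|\int \wt{\psi}^N |\phi^N|^2\, dx\Big| \lesssim \|\wt{\psi}^N\|_{H^{-\eps}}\,\||\phi^N|^2\|_{H^{\eps}} \lesssim \|\wt{\psi}^N\|_{H^{-\eps}}\,\|\phi^N\|_{L^\infty}\,\|\phi^N\|_{H^{\eps}},
\]
for some small $\eps>0$, using the standard 1-D fractional Leibniz rule. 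The decisive use of the $L^2$-cutoff enters through an interpolation $\|\phi^N\|_{H^\eps} \le \|\phi^N\|_{L^2}^{1-\theta} \|\phi^N\|_{H^{s}}^{\theta}$ with $s<\tfrac12$ and $\theta<1$, so that $\|\phi^N\|_{H^\eps} \le B^{1-\theta}\|\phi^N\|_{H^s}^\theta$ on $\Omega_{N,B}$. Each of $\|\wt{\psi}^N\|_{H^{-\eps}}$, $\|\phi^N\|_{L^\infty}$, $\|\phi^N\|_{H^s}$ is a Wiener chaos of degree at most $2$, whose distribution under $\rho_N$ has uniformly (in $N$) controlled variance thanks to $\eps>0$ and $s<\tfrac12$; hypercontractivity and Chebyshev then yield the required sub-exponential tail on $\Omega_{N,B}$.

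The main obstacle is that the unconstrained trilinear form $F_N$ is a Wiener chaos of degree three, whose natural tails $e^{-c\lambda^{2/3}}$ are too slow to make $e^{|F_N|}$ integrable. The role of the $L^2$-cutoff $\chi_{\Omega_{N,B}}$ is precisely to upgrade these tails to the exponential rate, by converting one factor of $\|\phi^N\|_{L^2}$ into the deterministic constant $B$ and effectively reducing the random factor to a degree-$2$ quantity. Once the uniform exponential tail bound is in place, integrating against any $r<\infty$ gives $e^{-\frac{r\alpha}{2}F_N}\chi_{\Omega_{N,B}} \in L^1(d\rho_N)$ with bound independent of $N$, and Fatou's lemma combined with the $L^r(d\rho)$-convergence of the truncated interaction yields \eqref{GABSCONTI2}.
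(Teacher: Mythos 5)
Your overall strategy---a pointwise bound on the interaction followed by a uniform tail estimate---is the right general shape, but the specific bound you propose cannot deliver the conclusion for \emph{all} $r<\infty$, and this is a genuine gap rather than a presentational one. After your dualization and interpolation, the exponent is controlled by the \emph{product}
\[
\|\wt{\psi}^N\|_{H^{-\eps}}\,\|\phi^N\|_{L^\infty}\, B^{1-\theta}\|\phi^N\|_{H^{s}}^{\theta},
\]
in which two of the factors, $\|\wt{\psi}^N\|_{H^{-\eps}}$ and $\|\phi^N\|_{L^\infty}$, are norms of (independent) Gaussian vectors with genuinely Gaussian tails $e^{-c\lambda^2}$ and no better; the $L^2$-cutoff is spent entirely on the third factor. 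A product of two independent random variables with exact Gaussian tails has tails no better than $e^{-C\lambda}$ for a \emph{fixed} $C$ determined by the covariances (compare $\mathbb{E}\,e^{rXY}=(1-r^2)^{-1/2}$ for standard Gaussians, which is infinite for $r\geq 1$). Hence your bound yields $e^{-\frac{r\al}{2}F_N}\chi_{\Omega_{N,B}}\in L^1$ only for $r$ below a threshold, not for arbitrary $r<\infty$ as the lemma requires (and as Bourgain's globalization argument uses). The cutoff cannot be rerouted onto $\|\phi^N\|_{L^\infty}$ either, since any interpolation of $L^\infty$ between $L^2$ and $H^{\sigma}$ needs $\sigma>\tfrac12$, where $\phi$ fails to live almost surely. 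Your final sentence, ``integrating against any $r<\infty$,'' silently assumes a tail $e^{-c\lambda^{1+}}$ that the argument does not produce.

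The paper's proof avoids this by making the splitting \emph{additive} rather than multiplicative: Young's inequality on the Fourier side in the form $ab\leq \tfrac{1}{p'}a^{p'}+\tfrac{1}{p}b^{p}$ with $p=2-$ replaces the trilinear form by the sum $\|\ft{\phi}\|_{L_n^{4/3-}}^{4+}+\|\ft{\psi}\|_{L_n^{2+}}^{2-}$. The $\psi$-term is a power \emph{strictly less than} $2$ of a Gaussian norm, so Fernique gives exponential integrability to all orders; the $\phi$-term is exactly the quantity Bourgain controls with the $L^2$-cutoff via the dyadic estimate $\|\{\ft{\phi}\}_{|n|\sim M}\|_{L_n^{4/3-}}^{4+}\leq CM^{\frac14+}\big(\sum_{|n|\sim M}|a_n|^2\big)^{1/2}$. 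Each summand is exponentially integrable for every $r$, so no product of unit-strength Gaussian factors ever appears in the exponent. Note also that applying Fernique to $\|\ft{\psi}\|_{L_n^{2+}}$ is not free: since $\psi\in H^{0-}\setminus L^2$ a.s., one must first show that $\ft{L}_n^{2+}$ is a measurable norm for the Gaussian on $\dot H^{1/2}$, which is the content of the paper's Lemma \ref{LEM:decay} (a.s.\ decay of $\max_{|n|\sim M}|g_n|^2/\sum_{|n|\sim M}|g_n|^2$) together with Egoroff and a polar-coordinate tail computation; your substitute $\|\wt{\psi}^N\|_{H^{-\eps}}$ sidesteps that difficulty but at the cost of the fatal product structure above. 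Finally, your reduction of \eqref{GABSCONTI2} to \eqref{GABSCONTI1} plus a limiting argument is unnecessary: the paper proves \eqref{GABSCONTI2} directly, the truncated case being identical with constants manifestly uniform in $N$.
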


\noindent
In particular, $d\mu$ is a well-defined probability measure.
Moreover, we have  $d\mu_N \ll d(a_0, b_0) \otimes d\rho_N$ and   $d\mu \ll d(a_0, b_0) \otimes d\rho$.
Then, from Lemma \ref{tight0}, we have 
\begin{corollary} [tightness of $\mu_N$ and $\mu$] \label{tight}
For large $K > 0$, we have
\[ \mu_N \big(\Omega_{N, B} \setminus \Omega_{N, B} (s_1, s_2, K) \big) \leq e^{-cK^2}, 
\text{ and } \
 \mu \big(\Omega_{ B} \setminus  \Omega_{ B} (s_1, s_2, K) \big) \leq e^{-cK^2},\]
where $c$ and the implicit constant are independent of $N$.
\end{corollary}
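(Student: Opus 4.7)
\textbf{Proof proposal for Corollary \ref{tight}.}
The plan is to transfer the Gaussian tail bound from Lemma \ref{tight0} (which controls the purely Gaussian measures $\rho_N$ and $\rho$) to the weighted Gibbs measures $\mu_N$ and $\mu$, using the uniform $L^r$-integrability of the density provided by Lemma \ref{LEM:GABSCONTI}. The conversion mechanism is H\"older's inequality, which is essentially the only tool available once one has an $L^r$ bound on a Radon--Nikodym derivative and a tail bound on the base measure.

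Fix $r \in (1,\infty)$ with conjugate exponent $r'$. Write
\[
\mu_N(A) \;=\; Z_N^{-1} \int_A F_N \, d(a_0, b_0) \otimes d\rho_N, \qquad
F_N \;=\; \exp\!\Big( -\tfrac{\al}{2} \int \mathbb{P}_N \psi \, |\mathbb{P}_N \phi|^2 \, dx\Big)\chi_{\Omega_{N,B}}.
\]
H\"older then gives
\[
\mu_N(A) \;\leq\; Z_N^{-1}\,\|F_N\|_{L^r(d(a_0,b_0)\otimes d\rho_N)}\;\bigl[(d(a_0,b_0)\otimes d\rho_N)(A)\bigr]^{1/r'}.
\]
The first two factors are bounded uniformly in $N$: the $L^r$ norm of $F_N$ by \eqref{GABSCONTI1}, and $Z_N^{-1}$ by a standard Jensen-type lower bound (using that $(a_n, b_n)_{n \neq 0}$ are mean-zero and independent under $\rho_N$, so $\mathbb{E}_{\rho_N}\bigl[\int \mathbb{P}_N \psi\, |\mathbb{P}_N \phi|^2 dx\bigr] = 0$, which yields $Z_N \gtrsim (d(a_0,b_0)\otimes d\rho_N)(\Omega_{N,B}) \geq c(B) > 0$).

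Now specialize to $A = \Omega_{N,B}\setminus \Omega_{N,B}(s_1,s_2,K)$. The $(a_0,b_0)$-slice of $\Omega_{N,B}$ has Lebesgue measure $\lesssim B^3$, and on $\Omega_{N,B}$ the mean component contributes at most $O(B)$ to $\|(\phi,\psi)\|_{\mathrm{H}^{s_1,s_2}}$. Hence, for $K \gg B$, the event $\|(\phi,\psi)\|_{\mathrm{H}^{s_1,s_2}} > K$ forces the mean-zero part to have norm exceeding $K/2$, so
\[
(d(a_0,b_0)\otimes d\rho_N)(A) \;\leq\; C B^3 \cdot \rho_N\bigl(\wt{\Omega}_{N,B}\setminus \wt{\Omega}_{N,B}(s_1,s_2,K/2)\bigr) \;\leq\; C B^3 e^{-c K^2}
\]
by Lemma \ref{tight0}, with $c$ independent of $N$. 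Plugging back,
\[
\mu_N(A) \;\leq\; C(r,B)\, e^{-c K^2 / r'},
\]
and after absorbing constants into $c$ and relabelling, one obtains $\mu_N(A) \leq e^{-c K^2}$ for all sufficiently large $K$. The argument for $\mu$ on $\Omega_B \setminus \Omega_B(s_1,s_2,K)$ is identical, using \eqref{GABSCONTI2} and the $\rho$-tail bound from Lemma \ref{tight0}.

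The main obstacle is essentially bookkeeping: (i) verifying that the uniform $L^r$ bound in Lemma \ref{LEM:GABSCONTI} carries an $N$-independent constant, which is the nontrivial content of that lemma, and (ii) checking that $Z_N$ is bounded below independently of $N$, which is the point where one needs the mean-zero symmetry of the Gaussian field together with $B$-boundedness of the $n = 0$ modes. Once these two ingredients are in place, the desired Gaussian tail for $\mu_N$ and $\mu$ follows mechanically from H\"older, with the exponent in $e^{-cK^2}$ simply rescaled by $1/r'$.
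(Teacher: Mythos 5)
Your proposal is correct and follows essentially the same route the paper intends: the paper omits the details, deriving the corollary directly from Lemma \ref{tight0} together with the $L^r$ bound of Lemma \ref{LEM:GABSCONTI} via exactly the H\"older argument you write out (with the exponent $1/r'$ absorbed into the constant $c$). Your additional bookkeeping --- the uniform-in-$N$ lower bound on $Z_N$ via the mean-zero symmetry, and the reduction of the full event to the mean-zero event $\wt{\Omega}_{N,B}\setminus\wt{\Omega}_{N,B}(s_1,s_2,K/2)$ for $K\gg B$ --- correctly fills in the steps the paper leaves implicit.
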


\noindent
The proof of Lemma \ref{LEM:GABSCONTI} is based on Bourgain's argument in \cite{BO4}.
We have an additional difficulty since $\psi \in H^{0-} \setminus L^2$ almost surely.
i.e. the argument in in the sub-$L^2$ setting and we need to employ a probabilistic argument.

\begin{proof} [Proof of Lemma \ref{LEM:GABSCONTI}]
We  prove only \eqref{GABSCONTI2}.
First, note that on $ \Omega_B $, we have $|a_0|, |b_0| \leq B$.
Then, from Young's inequality with $p = 2-$ and $\frac{1}{p}+\frac{1}{p'} = 1$, we have
\begin{align*}
 \int \psi   |\phi|^2 dx  
& \leq \big\| \ft{|\phi|^2} \big\|_{L_n^{p}}\big\| \ft{\psi} \big\|_{L_n^{p'}} 
 \leq \tfrac{1}{p'}\big\| \ft{|\phi|^2} \big\|^{p'}_{L_n^{p}}
+ \tfrac{1}{p}\| \ft{\psi} \|^{p}_{L_n^{p'}} \\
& \leq \tfrac{1}{p'}\| \ft{\phi} \|^{2p'}_{L_n^{\frac{2p}{1+p}}}
+ \tfrac{1}{p}\| \ft{\psi} \|^{p}_{L_n^{p'}} 
  \lesssim |a_0|^{2p'} +  \big\| \{ \ft{\phi}\}_{n \ne 0} \big\|^{2p'}_{L_n^{\frac{2p}{1+p}}} 
+  |b_0|^p + \big\| \{ \ft{\psi} \}_{n \ne 0}\big\|^{p}_{L_n^{p'}} .
\end{align*}

\noindent
Since $\frac{2p}{1+p} = \frac{4}{3}-$, we have 
\begin{align*}
\Big\| & e^ {-\frac{\al}{2}\int_{\mathbb{T}} \psi|\phi|^2 dx}  \chi_{\Omega_B} \Big\|_{L^r (d (a_0, b_0) \otimes d\rho)} \\
& \lesssim \bigg(\intt_{\substack{|a_0| \leq B \\ |b_0|\leq B }} 
e^ { c(B^p + B^{2p'})} \chi_{\Omega_B}d (a_0, b_0) \bigg)^\frac{1}{r}
\Big\|e^{c (\| \{ \ft{\phi}\}_{n \ne 0} \|^{4+}_{L_n^{\frac{4}{3}-}}
  + \| \{ \ft{\psi} \}_{n \ne 0}\|^{2-}_{L_n^{2+}} )} \chi_{\Omega_B}\Big\|_{L^r(d\rho)} \\
& \lesssim C_B  
\Big\|\exp\big({c( \| \{ \ft{\phi}\}_{n \ne 0} \|^{4+}_{L_n^{\frac{4}{3}-}}
  + \| \{ \ft{\psi} \}_{n \ne 0}\|^{2-}_{L_n^{2+}} )} \big) \chi_{\Omega_B}\Big\|_{L^r(d\rho)}. 
\end{align*}

\noindent
Let $d \rho_1= d\rho\big|_{(a_n)_{n\ne 0}}$ and $d\rho_2 = d\rho\big|_{(b_n)_{n\ne 0}}$, 
i.e.
\[ d \rho_1 = \wt{Z}_1^{-1} e^{-\frac{1}{2} \sum_{n\ne 0} n^2 |a_n|^2} \prod_{n \ne 0 } d a_n,
\ \ \text{ and } \ \ 
d \rho_2 = \wt{Z}_2^{-1} e^{-\frac{1}{2} \sum_{n\ne 0} \kk |n| |b_n|^2} \prod_{n \ne 0 } d b_n, 
\]

\noindent
where
$\wt{Z}_1 = \int e^{-\frac{1}{2} \sum_{n\ne 0} n^2 |a_n|^2} \prod_{n \ne 0 } d a_n$
and 
$\wt{Z}_2 = \int e^{-\frac{1}{2} \sum_{n\ne 0} \kk |n| |b_n|^2} \prod_{n \ne 0 } d b_n.$
Then, since $d\rho = d\rho_1 \otimes d\rho_2$,
it suffices to prove, for arbitrary $r < \infty$,
\begin{align} \label{GLR1}
\exp \big({ \|\ft{\phi} \|_{L_n^{\frac{4}{3}-}}^{4+}  } \big) \chi_{ \{\|\phi\|_{L^2} \leq B\} } 
& \in L^r (d\rho_1), 
\\ \label{GLR2}
\exp \big( { \|\ft{\psi}\|_{L_n^{2+}}^{2-}  } \big) & \in L^r (d\rho_2), 
\end{align}

\noindent
for  $\phi$ and $\psi$ with mean 0.
First, note that,  
by H\"older inequality, we have
\[\big\|\{\ft{\phi}(n)\}_{n \sim M} \big\|_{L_n^{\frac{4}{3}-}}^{4+} 
= \Big( \sum_{|n|\sim M} |a_n|^{\frac{4}{3}-}\Big)^{\frac{3}{4}+}
\leq C  M^{\frac{1}{4}+} \Big( \sum_{|n|\sim M} |a_n|^2\Big)^\frac{1}{2}.\]

\noindent
for any $M$ dyadic.
Then, we see that the proof of \eqref{GLR1} is basically the same as that of  
$\exp \big({ \|\phi \|_{L_x^{4+}}^{4+}  } \big) \chi_{ \{\|\phi\|_{L^2} \leq B\} } 
\in L^r (d\rho_1)$ in  Bourgain \cite{BO4}.

Now, we turn to the proof of \eqref{GLR2}.
First, assume that 
$(i, \dot{H}^\frac{1}{2}, \ft{L}_n^{2+})$ is an abstract Wiener space (with respect to $\rho_2$), 
where $\ft{L}_n^{2+}$ is the space defined via the norm
$\|\psi\|_{\ft{L}_n^{2+}} =\|\ft{\psi}\|_{{L}_n^{2+}}.$
Then, by Lemma \ref{LEM:FER}, we have
\begin{align*}
& \big\|  e^{ \| \psi\|_{\ft{L}_n^{2+}}^{2-}  }   \big\|^r_{L^r(d\rho_2)} 
\leq  \int_{\{ \|\psi\|_{\ft{L}_n^{2+}} \leq K\}} e^{ r\|\psi\|_{\ft{L}_n^{2+}}^{2-}  } d \rho_2
+ \sum_{j = 0}^\infty \int_{\{ 2^j K \leq \|\psi\|_{\ft{L}_n^{2+}} < 2^{j + 1} K\}} e^{ r\|\psi\|_{\ft{L}_n^{2+}}^{2-}  } d \rho_2 \\
&\hphantom{X} \leq e^{ r K^{2-}}
+ \sum_{j = 0}^\infty e^{ r (2^{j + 1} K)^{2-}}  \rho_2 \big[ \|\psi\|_{\ft{L}_n^{2+}} \geq 2^j K \big] 
\lesssim e^{ r K^{2-}} +  \sum_{j = 1}^\infty e^{-c(2^{j}K)^2 + 2^{2-}r (2^{j}K)^{2-}} 
< \infty. 
\end{align*}

Hence, it remains to show that $(i, \dot{H}^\frac{1}{2}, \ft{L}_n^{2+})$ is an abstract Wiener space. 
Recall that $\psi = \sum_{n \ne 0} b_n e^{inx} 
= \sum_{n \ne 0} \frac{g_n(\omega)}{|\kk|^\frac{1}{2}|n|^\frac{1}{2}}  e^{inx}$, 
where $\{g_n(\omega)\}$ is a sequence of i.i.d. complex Gaussian random variables
with $g_{-n} = \cj{g_n}$.
Note that it suffices to show that for given $\eps > 0$
there exists large $M_0$ such that $\rho_2 \big[ \|\mathbb{P}_{> M_0} \psi\|_{\ft{L}_n^{2+}} > \eps \big] < \eps$,
 where $\mathbb{P}_{> M_0}$ is the Dirichlet projection onto the frequencies $|n| > M_0$.
First, we present a lemma which provides an a.s.  decay at high frequencies in the sub-$L^2_n$ setting.

\begin{lemma} \label{LEM:decay}
Let $\{g_n\}$ be a sequence of i.i.d Gaussian random variables.
Then, for $M$ dyadic and $\dl >0$, we have
\begin{equation} \label{EQ:decay}
\lim_{M\to \infty} M^{1-\dl} \frac{\max_{|n|\sim M } |g_n|^2}{ \sum_{|n|\sim M} |g_n|^2} 
 = 0, \text{ a.s.} 
\end{equation}
\end{lemma}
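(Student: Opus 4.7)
The plan is to prove Lemma \ref{LEM:decay} by a Borel--Cantelli argument along the dyadic sequence $M = 2^k$, controlling the denominator from below via a law-of-large-numbers type estimate and the numerator from above via standard Gaussian tail bounds. Since both estimates will be quantitative with summable failure probabilities, the two deterministic bounds hold simultaneously for all sufficiently large dyadic $M$, almost surely.

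First I would estimate the denominator. For complex (or real) i.i.d.\ Gaussians $g_n$, the sum $S_M := \sum_{|n|\sim M} |g_n|^2$ has mean of order $M$ and variance of order $M$. A Chebyshev (or, better, a Bernstein-type exponential) inequality gives
\[
\mathbb P\Bigl[\, S_M \leq \tfrac{c_1}{2} M \,\Bigr] \leq \frac{C}{M},
\]
which is summable along $M = 2^k$. By Borel--Cantelli, $S_M \gtrsim M$ eventually a.s.\ along dyadic $M$.

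Next I would estimate the numerator. Using the Gaussian tail $\mathbb P[|g_n|>t]\leq e^{-t^2/2}$ and a union bound over the $\sim M$ indices in $\{|n|\sim M\}$,
\[
\mathbb P\Bigl[\, \max_{|n|\sim M} |g_n|^2 > 4\log M \,\Bigr] \leq CM\, e^{-2\log M} = \frac{C}{M},
\]
again summable along dyadic $M$. Borel--Cantelli then gives $\max_{|n|\sim M} |g_n|^2 \leq 4\log M$ eventually a.s.

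Combining these two deterministic bounds, for a.e.\ $\omega$ and all sufficiently large dyadic $M$,
\[
M^{1-\delta}\,\frac{\max_{|n|\sim M}|g_n|^2}{\sum_{|n|\sim M}|g_n|^2} \;\lesssim\; M^{1-\delta}\cdot \frac{4\log M}{c_1 M} \;=\; \frac{C\log M}{M^{\delta}} \longrightarrow 0
\]
as $M\to\infty$. No step here is a genuine obstacle; the only mild subtlety is to make sure the tail bound on the denominator is summable along dyadic $M$, which is why I prefer a Bernstein-type concentration inequality for the chi-square-distributed $S_M$ over the crude Chebyshev bound, though either works because we only need to sum over $M=2^k$.
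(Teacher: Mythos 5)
Your proof is correct, and it only needs convergence along dyadic $M$, which is all the lemma asserts. The overall architecture --- separate almost-sure bounds for numerator and denominator combined at the end --- matches the paper's, but the key estimates are genuinely different. For the numerator the paper does not invoke Gaussian tails at all: it sets $X_n = |g_n|^2$, applies Chebyshev's inequality to a high moment $\mathbb{E}\big[X_1^{(2+\theta)/\delta}\big]$ to get $\mathbb{P}\big[\max_{1\le j\le n} X_j > \eps n^{\delta}\big] \lesssim n^{-1-\theta}$, summable over \emph{all} $n$, and then concludes $\max_{1\le j\le n}|g_j|^2 = o(n^{\delta})$ a.s.\ via Borel--Cantelli together with tail triviality (to upgrade ``$\limsup \le \eps$ for each $\eps$'' to ``$\limsup = 0$''). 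For the denominator it simply quotes Kolmogorov's SLLN, $S_n/n \to 1$ a.s. Your version exploits Gaussianity through the tail bound $\mathbb{P}[|g_n|>t]\le Ce^{-ct^2}$ plus a union bound, yielding the sharper statement $\max_{|n|\sim M}|g_n|^2 \lesssim \log M$ eventually a.s., and replaces the SLLN by a quantitative Chebyshev/Bernstein concentration bound made summable by the dyadic restriction. What each approach buys: the paper's argument uses only finitely many moments of $|g_1|^2$, so it extends verbatim to non-Gaussian variables and gives the decay along the full integer sequence; yours is shorter, avoids the zero--one-law step entirely (since $M^{-\delta}\log M \to 0$ outright), and produces a quantitatively stronger conclusion, namely that the ratio in \eqref{EQ:decay} is $O(M^{-\delta}\log M)$ a.s. Both are complete proofs of the stated lemma.
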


\begin{proof}
We show $ \lim_{n \to \infty} n^{1-\dl} \frac{\max_{1\leq j \leq n } |g_j|^2}{ \sum_{j = 1}^n |g_j|^2} = 0$, 
a.s.
Let $X_n = |g_n|^2$.  Then, $\{X_n\}_{n \in \mathbb{N}} $ is a sequence of i.i.d random variables with $E|X_n|  = 1< \infty$.
By Kolmogorov's SLLN, we have $\frac{S_n}{n} \to 1$ a.s., where $S_n = \sum_{j = 1}^n X_j $.
Now, fix $\eps> 0$ and $ \theta > 0$.
By Chebyshev's inequality, we have 
\begin{equation} \label{chebyshev}
n^{2+\theta} \mathbb{P} [ X_1 > \eps n^\delta ] 
\leq \eps^{-\frac{2+ \theta}{\dl}} \int_{\{\omega: X_1(\omega) > \eps n^\dl\} }
X_1^{\frac{2+ \theta}{\dl}} \mathbb{P}(d \omega) 
\leq \eps^{-\frac{2+ \theta}{\dl}} \mathbb{E}[X_1^{\frac{2+ \theta}{\dl}}] < \infty
\end{equation}

\noi
for all $n \in \mathbb{N}$ as long as $\delta > 0$.
Let $M_n = \max_{1\leq j \leq n } X_j$. 
Then, by the independence of $X_j$ and \eqref{chebyshev}, we have 
\begin{align*} 
 \sum_{n = 1}^\infty \mathbb{P}[ & M_n >  \eps n^{\dl}] 
\leq \sum_{n = 1}^\infty n \mathbb{P}[ X_1 >  \eps n^{\dl}] 
= \sum_{n = 1}^\infty n^{-1-\theta} n^{2+\theta} \mathbb{P}[ X_1 >  \eps n^{\dl}] 
\lesssim  \sum_{n = 1}^\infty n^{-1-\theta} < \infty.
\end{align*}

\noi
Then, by Borel-Cantelli lemma,
$\mathbb{P} [ \frac{M_n}{n^{\dl}} > \eps,  \text{ i.o.} ] = 0$. 
This implies that $\limsup_{n \to \infty} \frac{M_n}{n^{\dl}} \leq \eps$, a.s. 
Since $\limsup_{n \to \infty} \frac{M_n}{n^{\dl}}$ is a tail function, 
it is a.s. constant. 
Noting that this nonnegative constant is bounded above by any $\eps > 0$, we conclude that 
$\limsup_{n \to \infty} \frac{M_n}{n^{\dl}} = 0$, a.s.
\end{proof}

Now, fix $\eps > 0$ and $\dl \in ( 0, \frac{1}{2})$.
Then, by Lemma \ref{LEM:decay} and Egoroff's theorem, 
there exists a set $E$ with $\rho_2 (E^c) < \frac{1}{2} \eps$
such that the convergence in Lemma \ref{LEM:decay} is uniform on $E$.
i.e. we can choose dyadic $M_0$ large such that 
\begin{align} \label{A:decay}
\frac{\| \{g_n (\omega) \}_{|n| \sim M} \|_{L_n^{\infty}} }
{\| \{g_n (\omega) \}_{|n| \sim M} \|_{L_n^{2}} }
  \leq M^{-\dl}, 
\end{align}

\noindent 
for  all $\omega \in E$  and dyadic $M > M_0$.
In the following, we will work only on $E$ and  drop `$\cap E$' for notational simplicity. 
However, it should be understood that all the events are under the intersection with $E$ and 
thus \eqref{A:decay}  holds.

Let $\{\s_j \}_{j \geq 1}$ be a sequence of positive numbers such that $\sum \s_j = 1$,
and let $ M_j = M_0 2^j$ dyadic.
Note that $\s_j = C 2^{-\ld j} =C M_0^\ld M_j^{-\ld}$ for some small $\ld > 0$
(to be determined later.)
Then,  we have
\begin{equation} \label{A:subadd} 
\rho_2 \big[ \big\| \mathbb{P}_{>M_0} \psi \big\|_{\ft{L}_n^{2+}} > \eps \big] 
\leq \sum_{j = 0}^\infty \rho_2 \big[ \| \{b_n \}_{|n| \sim M_j} \|_{L_n^{2+}}  > \s_j \eps \big].
\end{equation}

\noindent
By H\"older inequality and \eqref{A:decay}, we have  
\begin{align*}
\| \{ & b_n \}_{|n| \sim M_j} \|_{L_n^{2+\theta}} 
\sim M_j^{-\frac{1}{2}} \| \{ g_n \}_{|n| \sim M_j} \|_{L_n^{2+\theta}} 
\leq  M_j^{-\frac{1}{2}} \| \{ g_n \}_{|n| \sim M_j} \|_{L_n^{2}}^\frac{2}{2+\theta}
  \| \{ g_n \}_{|n| \sim M_j} \|_{L_n^{\infty}}^\frac{\theta}{2+\theta} \\
& \leq  M_j^{-\frac{1}{2}} \| \{ g_n \}_{|n| \sim M} \|_{L_n^{2}}
\Bigg(\frac{  \| \{ g_n \}_{|n| \sim M_j} \|_{L_n^{\infty}} }
{\| \{ g_n \}_{|n| \sim M_j} \|_{L_n^{2}}} \Bigg)^\frac{\theta}{2+\theta}
\leq  M_j^{-\frac{1}{2} -\dl \frac{\theta}{2+\theta}} \| \{ g_n \}_{|n| \sim M_j} \|_{L_n^{2}}
\end{align*}

\noindent
a. s. where $\theta = 0+$.
Thus, if we have $\|\{b_n \}_{|n| \sim M_j} \|_{L_n^{2+}}  > \s_j \eps$,
 then we have
 $\| \{ g_n \}_{|n| \sim M_j} \|_{L_n^{2}} 
 \gtrsim R_j $
 where $R_j := \s_j  \eps M_j^{\frac{1}{2}+\dl \frac{\theta}{2+\theta}} $.
Now, take $\ld$ sufficiently small such that $\dl \frac{\theta}{2+\theta}-\ld > 0$.
By a direct computation in the polar coordinates with 
$R_j = \s_j \eps M_j^{\frac{1}{2}+\dl \frac{\theta}{2+\theta}} 
= C \eps M_0^\ld M_j^{\frac{1}{2}+\dl \frac{\theta}{2+\theta}-\ld}
= C \eps M_0^{\ld} M_j^{\frac{1}{2}+} $, 
we have 
\begin{align} \label{GLR6}
\rho_2\big[  \| \{ g_n \}_{|n| \sim M_j} \|_{L_n^{2}}  \gtrsim R_j \big]
 \sim \int_{B^c(0, R_j)} e^{-\frac{1}{2}|g|^2} \prod_{|n| \sim M_j} dg_n
 \lesssim \int_{R_j}^\infty e^{-\frac{1}{2}r^2}  r^{2 \cdot \# \{|n| \sim M_j\} -1} dr,
\end{align}

\noindent 
Note that the implicit constant in the inequality is 
$\s(S^{2 \cdot \# \{|n| \sim M_j\} -1})$, a surface measure of the 
$2 \cdot \# \{|n| \sim M_j\} -1$ dimensional unit sphere.
We drop it since $\s(S^n) = 2 \pi^\frac{n}{2} / \Gamma (\frac{n}{2}) \lesssim 1$.
By change of variable $t = M_j^{-\frac{1}{2}}r$, we have
$r^{2 \cdot \# \{|n| \sim M_j\} -2} \lesssim r^{4M_j} \sim M_j^{2M_j} t^{4M_j}.$
Since $t > M_j^{-\frac{1}{2}} R_j = C \eps M_0^{\ld} M_j^{0+}$, we have
$M_j^{2M_j} = e^{2M_j \ln M_j} < e^{\frac{1}{8}M_jt^2}$ 
and $ t^{4M_j} < e^{\frac{1}{8}M_jt^2}$
for $M_0$ sufficiently large.
Thus, we have
$r^{2 \cdot \# \{|n| \sim M_j\} -2} < e^{\frac{1}{4}M_jt^2} = e^{\frac{1}{4}r^2}$ 
for $ r > R.$
Hence,  we have
\begin{align} \label{A:highfreq1}
\rho_2\big[   \| \{ g_n & \}_{|n| \sim M_j} \|_{L_n^{2}}  \gtrsim R_j \big]
 \leq C \int_{R_j}^\infty e^{-\frac{1}{4}r^2} r dr  \\
& \leq e^{-cR_j^2} = e^{-cC^2 M_0^{2\ld} M_j^{1+} \eps^2}
=  e^{-cC^2 M_0^{1+2\ld+} 2^{j+} \eps^2}. \notag
\end{align}

\noindent
From \eqref{A:subadd} and \eqref{A:highfreq1}, we have
\begin{align*} 
\rho_2  [ \| \{b_n  \}_{|n| > M_0} \|_{L_n^{2+}} > \tfrac{1}{2}\eps ]
\leq \sum_{j =1}^\infty 
\rho_2 \big[ \| \{ g_n \}_{|n| \sim M_j} \|_{L_n^{2}}  > R_j \big]
 \leq e^{-c' M_0^{1+2\ld+} 2^{j+} \eps^2} <\tfrac{1}{2}\eps, 
\end{align*}

\noi
by choosing $M_0$ sufficiently large.
 This completes the proof of Lemma \ref{LEM:GABSCONTI}.
\end{proof}

\section{Appendix: On the Ill-posedness Results in $H^{s}(\T) \times H^{s-\frac{1}{2}}(\T)$ for $s < \frac{1}{2}$. }

There are  so called ``ill-posedness" results for dispersive equations such as NLS and KdV.
However, this term often refers to the necessary conditions for uniform continuity or smoothness of 
the solution map $\Phi^t : u_0 \in H^s \longmapsto u(t) \in H^s$. 
In such cases, the Cauchy problem is not necessarily ill-posed in the sense of the usual definition, even when these results hold.
However, since the contraction argument provides analytic dependence on the initial data, 
it is often natural to consider a strengthened  notion of 
well-posedness requiring the solution map to be uniformly continuous/smooth.
In this latter sense, the following results may be regarded as ``ill-posedness" results. 

Here, we follow Bourgain's argument in \cite{BO3}. 
Consider the following Cauchy problem:
\begin{equation} \label{illposedSBO}
\begin{cases}
i u_t + u_{xx} = \al  vu \\
v_t + \g \mathcal{H} v_{xx} = \beta (|u|^2)_x \\
\big( u(x, 0), v(x, 0) \big) = \big( \delta \phi(x) ,  \delta \psi(x) \big) \in H^s(\T) \times H^{s-\frac{1}{2}} (\T)
\end{cases}
\end{equation}

\noindent
where $\dl \geq 0$.
Let $\big( u(x, t;\dl), v(x, t;\dl) \big)$ or $\big( u(t;\dl), v(t;\dl) \big)$ denote the solution to \eqref{illposedSBO} .
First, note that with $\dl = 0$, $\big( u(x, t; 0), v(x, t; 0) \big)  \equiv 0$ is the unique solution.
Also, by writing as integral equations, the solution $\big(u(t;\dl), v(t;\dl) \big)$ to \eqref{illposedSBO}  can be written as
\begin{equation*}
\begin{cases}
u(t;\dl) = \dl U(t) \phi +i \al  \int_0^t U(t - t') uv (t') dt' \\
v(t;\dl) = \dl V(t) \psi - \beta \int_0^t V(t - t') \dx \big( |u (t')|^2\big)  dt' .
\end{cases}
\end{equation*}

\noindent 
where $U(t) = e^{it\dx^2}$ and $V(t) = e^{-\g \mathcal{H} \dx^2}$.
By taking derivatives in $\dl$ at $\dl = 0$,  we have 
$\dd u(t;0) = U(t) \phi =: \phi_1$ and $\dd v(t;0) = V(t) \psi =: \psi_1$.
By taking the 2nd and 3rd derivatives in $\dl$ at $\dl = 0$, we have
\begin{equation*}
\begin{cases}
\dd^2 u(t; 0) = 2 i\al  \int_0^t U(t - t')  \big( \phi_1 \psi_1 \big) (t') dt'  =: \phi_2 \\
\dd^2 v(t; 0) = - 2 \beta \int_0^t V(t - t') \dx \big(  |\phi_1|^2  \big) (t') dt'  =: \psi_2.
\end{cases}
\end{equation*}

\noindent
Then, it follows that if the solution map $\Phi^t: (u_0, v_0)  \in H^s \times H^{s-\frac{1}{2}} 
\longmapsto \big( u(t), v(t) \big) \in H^s \times H^{s-\frac{1}{2}}$ is $C^2$ for fixed $t \ll 1$, then we must have
\begin{equation} \label{CK}
\left\| \dd^2 \big(u, v\big) (\cdot, t; 0)   \right\|_{H^s_x \times H^{s-\frac{1}{2}}_x} 
= \| (\phi_2, \psi_2) (\cdot, t) \|_{H^s_x \times H^{s-\frac{1}{2}}_x}
\lesssim \left\| (\phi, \psi ) \right\|^2_{H^s\times H^{s-\frac{1}{2}}} 
\end{equation}

\noindent
from the smoothness of $\Phi^t$ at the zero solution. 
In the following, we present the proof of Theorem \ref{THM:illposed}, 
assuming that the SBO system \eqref{SBO} is locally well-posed in $H^s (\T) \times H^{s-\frac{1}{2}}(\T)$ 
over a small time interval and fix $t \ll 1$ such that the solution map $\Phi^t$ is well-defined.

\begin{proof}[Proof of Theorems \ref{THM:illposed}]

Recall that
with $n = n_1 +n_2$, we have
\[  Q (n, n_1) :=  \g |n| n -  n_1^2+ n_2^2 = n \big( (1 + \g \text{sgn}(n) ) n - 2n_1). \]

\noindent
Let $c_\g = c_\g (n) = \frac{1 + \g \text{sgn}(n)}{2}$ and  $d_\g = 1 - c_\g$.
Thus, $Q(n, n_1) = 0$ when $n_1 = c_\g n$ and $n_2 = d_\g n$. 
Now, let $\| \rho \|$ denote the closest integer to $\rho$.
(If $\rho - [\rho] = \frac{1}{2}$, let $\|\rho\| = [\rho]$, where $[\, \cdot\, ]$ is the integer part function.)

Given $N \in \mathbb{N}$,  let $\psi \equiv 0$ and  
$\phi(x) = N^{-s} \big( e^{i \, \|c_\g N\|x} + e^{-i \, \|d_\g N\|x} \big)$ with $c_\g = c_\g(N)$ and $d_\g = d_\g(N)$.
Then, we have $\| (\phi, \psi) \|_{H^s \times H^{s - \frac{1}{2}}}  = \| \phi \|_{H^s} \sim 1$.
A direct computation shows that $\psi_1 = \phi_2 \equiv 0$ and 
\begin{equation*} 
 \phi_1 (x, t)  = N^{-s} \big( e^{i  \|c_\g N\|x - i\|c_\g N\|^2t} + e^{-i  \|d_\g N\|x - i\|d_\g N\|^2t} \big).
\end{equation*}

\noindent
Using $ \|c_\g N\| + \|d_\g N\| = N$, we have 
\begin{equation*} \label{Vpsi2}
V(t - t') \dx |\phi_1 (x, t')|^2 = - 2 N^{-2s + 1} \sin \big( N x - \g |N|N t+ Q(N, \|c_\g N\|) t' \big).
\end{equation*}

Suppose $\g \in \mathbb{Q}$ with $|\g| \ne 1$. 
Then, for $n = N \in \mathbb{N}$, we have $c_\g (n) = \frac{1+\g}{2}\in \mathbb{Q}$.
Thus, there exist infinitely many $N \in \mathbb{N}$ such that $c_\g N, \, d_\g N \in \mathbb{Z}$.
Hence, we have $Q(N, \|c_\g N\|) = Q(N, c_\g N) = 0$ for all such $N$.
Then, 
\[\psi_2 = 4 \beta N^{-2s + 1}t \sin \big( N x - \g |N|N t)\]

\noindent
and thus $\| (\phi_2, \psi_2) \|_{H^s\times H^{s - \frac{1}{2}}} \sim \|\psi_2\|_{H^{s-\frac{1}{2}}} \sim N^{-s + \frac{1}{2}}$.
In view of \eqref{CK}, by letting $N \to \infty$, this implies $s \geq \frac{1}{2}$ if the solution map $\Phi^t$ is $C^2$.

Now, suppose $\g \notin \mathbb{Q}$.
Then, we have  $Q(N, \|c_\g N\|) \ne 0$ for any $n \in \mathbb{N}$.
From \eqref{Vpsi2}, we have 
\begin{align*} 
\psi_2 (x, t) 
& =  4 \beta N^{-2s+1} \frac{ \cos ( Nx -\g|N|N t) - \cos ( Nx  -(\|c_\g N\|^2 -\|d_\g N\|^2)t) }{Q(N, \|c_\g N\|)} \\
& = 8 \beta N^{-2 s + 1} \frac{ \sin \big(Nx - 2\g |N| N t + Q(N, \|c_\g N\|) t\big) \sin \big(Q(N, \|c_\g N\|) t\big)}{Q(N, \|c_\g N\|)}.
\end{align*} 

\noindent
Recall that Dirichlet Theorem \cite{LANG} says that for given $\rho \in \R \setminus \mathbb{Q}$, 
there exist infinitely many $(p, q) \in \mathbb{Z}^2$ such that
$ \big| \rho - \frac{p}{q} \big| < \frac{1}{q^2}.$
In our context, it says that there exist infinitely many $N \in \mathbb{N}$ such that
\[|Q(N, \|c_\g N\|)| = \min_{n_1\in \mathbb{Z} } |Q( N, n_1)| = N^2 \Big| c_\g - \frac{2 n_1}{N} \Big| \leq 1.\]

\noindent
Then, for such $N$, we have 
$\| (\phi_2, \psi_2) \|_{H^s\times H^{s - \frac{1}{2}}} \sim N^{-s + \frac{1}{2}}$.
In view of \eqref{CK}, by letting $N \to \infty$, this implies $s \geq \frac{1}{2}$ if the solution map $\Phi^t$ is $C^2$.

Finally, we will  consider the case when $|\g| = 1$.
Without loss of generality, assume $\g = 1$.
Then, $Q(N, N) = 0 $ for all $N \in \mathbb{N}$, i.e. $c_\g = 1$ and $d_\g = 0$.
(When $\g = -1$, we have $Q(N, N) = 0 $ for all $N \in \mathbb{Z}_{<0}$
and the following argument can be easily modified.)
Given $N \in \mathbb{N}$,  let $\psi \equiv 0$ and  
$\phi(x) = N^{-s}  e^{i Nx} +1$.
Then, we have $\| (\phi, \psi) \|_{H^s \times H^{s - \frac{1}{2}}}  = \| \phi \|_{H^s} \sim 1$.
A direct computation shows that $\psi_1 = \phi_2 \equiv 0$ and 
$ \phi_1 (x, t)  = N^{-s} e^{iNx - iN^2t} + 1$.
Thus, we have 
$ |\phi_1(x, t)|^2 = 2 N^{-s} \cos (N x - N^2 t) + N^{-2} + 1$ in this case
and 
\begin{equation*} 
V(t - t') \dx |\phi_1 (x, t')|^2 = - 2 N^{-s + 1} \sin \big( N x - N^2 t \big).
\end{equation*}

\noindent
since $Q(N, N) = 0$.
Therefore, we have 
\[\psi_2 = 4 \beta N^{-s + 1}t \sin \big( N x - N^2 t)\]

\noindent
and thus $\| (\phi_2, \psi_2) \|_{H^s\times H^{s - \frac{1}{2}}} \sim \|\psi_2\|_{H^{s-\frac{1}{2}}} \sim N^{\frac{1}{2}}$.
In view of \eqref{CK}, by letting $N \to \infty$, this implies that the solution map $\Phi^t$ can {\it never} be  $C^2$ for any $ s\in \R$.
\end{proof}

\smallskip

\noindent
{\bf Acknowledgements:} 
The author would like to thank Prof. Luc Rey-Bellet for mentioning the work of Gross \cite{GROSS}
and Kuo \cite{KUO}.

\end{document}